\theoremstyle{plain}
\newtheorem{thm}{Theorem}[section]
\newtheorem{lem}[thm]{Lemma}
\newtheorem{prop}[thm]{Proposition}
\newtheorem{defn}{Definition}[section]
\newtheorem*{thm*}{Theorem}
\newtheorem*{lem*}{Lemma}
\newtheorem*{prop*}{Proposition}
\newtheorem*{cor*}{Corollary}
\newtheorem*{defn*}{Definition}
\theoremstyle{definition}
\theoremstyle{remark}
\newtheorem*{rem}{Remark}
\newtheorem*{oss}{Observation}
\begin{document}

\begin{frontmatter}

\title{Stationary solutions for dyadic mixed model of the Euler equation. A complete spectrum.}

\author[uniflo,isti]{Carlo Metta\corref{cor1}}

\ead{carlo.metta@isti.cnr.it}
\cortext[cor1]{Corresponding author.}

\address[uniflo]{{University of Florence. Department of Mathematics, Informatics and Statistics, Italy.}}
\address[isti]{{ISTI-CNR, Via G. Moruzzi, Pisa, Italy.}}
\begin{center}
\begin{abstract}
\begin{flushleft}
Dyadic models of the Euler equations were introduced as toy models to study the behaviour of an inviscid fluid in turbulence theory. In 1974 Novikov proposed a generalized mixed dyadic model that extends both Katz-Pavlovic and Obukhov models giving birth to a more complex structure: no results were found in literature until 2015 where blow up in finite time for smooth solutions and existence of self-similar solution for particular values of the model parameters were shown by Jeong I.J. We extend such partial results by giving a complete spectrum of existence and uniqueness results for two cardinal classes of finite energy stationary solutions, namely constant and self-similar solutions.
\end{flushleft}
\end{abstract}
\end{center}

\begin{keyword}
Euler Equation, Dyadic Model, Turbulence Theory, Partial Differential Equations
\end{keyword}

\end{frontmatter}

\section{Introduction}

In this work we prove existence and uniqueness results for special classes of stationary solutions of the following system of ordinary differential equations:

\begin{equation}\label{mixedmodeliniziale}
\begin{split}
    \frac{d Y_n(t)}{dt} = \delta_1 &[k_n Y^2_{n-1}(t) - k_{n+1} Y_n(t) Y_{n+1}(t)] \\ -\delta_2 &[k_n Y^2_{n+1}(t) - k_{n-1} Y_n(t) Y_{n-1}(t)]
\end{split}
\end{equation}
for $n\geq 1$, where $k_n = 2^{\beta n}$ for some $\beta >0$, $\delta_1, \delta_2 \geq 0$ non negative parameters, and boundary conditions $Y_{-1} \equiv 0$, $Y_n(0) = y_n$.
\\
\\
Model (\ref{mixedmodeliniziale}) belongs to the family of dyadic shell models developed in turbulence theory in order to deepen the study of turbulent fluid dynamics \cite{biferale1}. These models are consistent with but simpler than the cardinal Euler equation. They are often used as toy model to experiment and apply novel and useful techniques related to Euler dynamics (for some effective results in this direction see for example \cite{bianchi2020}, \cite{tao1}).

In the Fourier representation of three-dimensional Euler equation, the transfer of energy from large to small scales is described as a flux of energy from small wave numbers to large wave numbers. The idea behind shell models is to divide the space into concentric spheres with exponentially growing radius $k_n = \lambda^n$, $\lambda > 1$. We then call $n$-th shell the set of wave numbers contained in the $n$-th sphere and not contained in the $(n-1)$-th sphere. The square of the scalar variable $Y_n(t)$ is linked with the energy of a fluid velocity vector field restricted to $n$-th shell frequency. The quantity
$$
E(t) = \sum_{n \geq 0} Y_n^2(t)
$$
is then called the \emph{energy} of the solution $Y_n(t)$. One can easily check from (\ref{mixedmodeliniziale}) that energy is \emph{formally} conserved in time. It is natural to restrict the study to non-negative and finite energy solutions.
\par
In order to state regularity results for dyadic models it is common to define counterpart of the Sobolev norms in the space of sequences. The $H^s$ norm of a solution $Y = (Y_0, Y_1, \ldots)$ at a certain time $t$ is given by the formula
$$
||Y_n(t)||_s^2 = \sum_{n\geq 0} 2^{2sn} Y_n^2(t).
$$
In particular, the energy of a solution is the square of its $H^0$ norm, usually called the $\ell^2$ norm.
\par
Shell models investigate the energy cascade flow with a system of coupled nonlinear ordinary differential equation of the form:

\begin{equation*}
    \frac{d}{dt} Y_n = k_n G_n [Y,Y] - \nu_n Y_n + f_n,
\end{equation*}
where the variable $Y_n$ represents the evolution over time of the velocity over a wavelength of scale $k_n$. The nonlinear function $G_n[\cdot,\cdot]$ is chosen to preserve some suitable properties inherited from the original nonlinear terms of Euler equation. Moreover, it is common for $G_n[\cdot, \cdot]$ to couple only scales that are close to each other (for instance nearest and next-to-nearest shells). The model is said to be either inviscid or viscous depending on whether the friction coefficient $\nu = \{ \nu_n \}_n $ is equal to 0. Similarly, the model is called unforced if there is no forcing term in the equation ($f_n = 0$), otherwise forced, if we add a positive forcing (constant) term to the first node in order to simulate a force sustaining turbulent fluids.
\par
The constraints to have local interaction, quadratic non-linearity, preserving total energy (or total helicity), and phase-space evolution do not fix uniquely the form $G_n[\cdot, \cdot]$. Consequently, many models have been developed even in recent years in order to study different aspects of turbulent fluids.
\\
\\
It is worth noting that model (\ref{mixedmodeliniziale}) reduces to what in the literature is called Katz-Pavlovic dyadic model \cite{katzpavlovic} (even though it had already been introduced by Novikov \cite{Novikov}) and Obukhov dyadic model \cite{obukhov2} by setting respectively $\delta_2 = 0$ and $\delta_1=0$, thus we expect it to carry both Katz-Pavlovic and Obukhov dynamics giving birth to a more complex structure: even simple uniqueness and positiveness properties proved for example in \cite{barbato2}, \cite{barbato1}, \cite{cheskidov1}, \cite{katzpavlovic}, \cite{kiselev} do not hold anymore.
\\
\\
As observed many times in literature, for example by Waleffe \cite{waleffe}, Katz Pavlovic and Obukhov models constitute the two basic blocks of all models satisfying four characteristic features derived from the spectral form of Euler equation: (i) quadratic non-linearity, (ii) appropriate scaling property of dimensionless coefficients, (iii) energy conservation, (iv) and nearest neighbor coupling. All of these except the last one descend from the Euler equation: the last condition is a simplification to make the problem more tractable. Thus, it is natural to investigate the dynamics of the generalized model (\ref{mixedmodeliniziale}), which from now on we refer to as the \emph{mixed (linear) dyadic model}, where we use the \emph{linear} attribute to distinguish from the more complex family of \emph{tree models} (see \cite{barbato4}, \cite{bianchi}, \cite{structurefunction}).
\\
\\
In Section \ref{risultati} we start by introducing main properties of model (\ref{mixedmodeliniziale}), focusing on existence and uniqueness results for \emph{constant} and \emph{self-similar} solutions. These results show a transition from a region, in the parameter space, where uniqueness is known to another one where it is false or open. They are consistent with and extend previous result found in literature. A K41 scaling property is also proved.
Section \ref{dimostrazione1} is devoted to present a different proof of Theorem 10 in \cite{barbato1}. The original proof was based on complex analysis and numerical simulations. Since it was not clear whether it can be extended to the case $\delta_1, \delta_2 >0$, we prefer to adopt another strategy by means of a novel pull-back technique. In Section \ref{dimostrazione2} and \ref{dimostrazione3} we then extend such technique proving the main results of Section \ref{risultati}, namely existence and uniqueness statements respectively for constant and self-similar solutions.

\section{Mixed dyadic model}\label{risultati}

In \cite{barbato2} positiveness property for weak solutions of the Katz-Pavlovic linear model was proved. This property plays a crucial role in many cardinal results, like the exponential global attraction of finite energy solutions to the unique constant solution (see \cite{cheskidov3}).
\\
Unfortunately, the positiveness property does not hold anymore in the mixed dyadic model. Indeed, by the variation of constants formula

\begin{equation}\label{constanformulabis}
\begin{split}
    Y_n(t) &= Y_n(t_0) \cdot e^{-\int_{t_0}^t [\delta_1 k_{n+1} Y_{n+1}(s) - \delta_2 k_{n-1} Y_{n-1}(s)]ds} + \\
    &+\int_{t_0}^t k_n [\delta_1 Y_{n-1}^2(s) - \delta_2 Y_{n+1}^2(s)]  \cdot e^{-\int_{s}^t [\delta_1 k_{n+1} Y_{n+1}(z) - \delta_2 k_{n-1} Y_{n-1}(z)]dz} ds.
    \end{split}
\end{equation}
In the Katz-Pavlovic model, i.e.\,\,$\delta_2 = 0$, one can easily derive that positiveness holds for positive initial condition. However, in the mixed model (\ref{mixedmodeliniziale}) the following condition
\begin{equation}\label{positivitamixed}
\delta_1 Y_{n-1}^2(t) - \delta_2 Y_{n+1}^2(t) \geq 0
\end{equation}
does not hold in general.
\\
\\
Because of its complex dynamics, there were no results in literature until 2015 \cite{jeong2}, where the authors show that smooth solutions blows up in finite time when $\delta_2 / \delta_1 $ is small enough, extending previous results which proved blow-up in the KP model \cite{cheskidov1}, \cite{frielander}, \cite{katzpavlovic}, \cite{kiselev}, \cite{waleffe}. Only later in 2019 \cite{jeong} the existence of self-similar solution for particular value of parameters $(\delta_1, \delta_2)$ was proved, within a local uniqueness theorem. Moreover, in \cite{montagnani}, the author proved the following theorem about the existence of weak solution of the mixed dyadic model for every initial condition $y \in \ell^2$, in the case $\delta_1 = \delta_2 = 1$.

\begin{thm}[Montagnani \cite{montagnani}]\label{teomonta}

Consider the infinite dimensional shell model
 $$\frac{d}{dt} Y_n (t) = k_n Y_{n-1} ^2 (t)- k_{n+1} Y_n (t) Y_{n+1} (t) - k_n Y_{n+1} ^2 (t) + k_{n-1} Y_n (t) Y_{n-1} (t)  ,$$
 $$ Y(0) = y.$$ 
 
Then, for any initial condition $y \in \ell^2$ there exists at least one solution $Y(t)$ on $[0,T]$.
\end{thm}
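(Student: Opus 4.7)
The plan is to prove existence via a Galerkin-type truncation, which is the natural strategy for shell models once one has formal conservation of energy. For each $N \geq 1$, I would consider the finite-dimensional system obtained from (\ref{mixedmodeliniziale}) by imposing $Y_n^{(N)} \equiv 0$ for $n > N$ (and keeping $Y_{-1} \equiv 0$), with initial data $Y_n^{(N)}(0) = y_n$ for $0 \leq n \leq N$. Since the right-hand side is polynomial in the unknowns, Cauchy--Lipschitz yields a unique local smooth solution $Y^{(N)}(t)$.

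The key a priori bound is exact conservation of energy at the truncated level. Multiplying the $n$-th equation by $Y_n^{(N)}$ and summing over $n = 0, \ldots, N$, the cubic terms from the $\delta_1$ block pair telescopically, because shifting $n \mapsto n-1$ identifies $k_n Y_{n-1}^2 Y_n$ at level $n$ with $k_{n+1} Y_n^2 Y_{n+1}$ at level $n-1$; the analogous identification works for the $\delta_2$ block after the substitution $m = n-1$. The boundary contribution at $n=N$ vanishes since $Y_{N+1}^{(N)} \equiv 0$, and the one at $n = 0$ vanishes because $Y_{-1} \equiv 0$. Hence
\begin{equation*}
\frac{d}{dt}\sum_{n=0}^{N}\bigl(Y_n^{(N)}(t)\bigr)^2 = 0,
\end{equation*}
so $\|Y^{(N)}(t)\|_{\ell^2} = \|y^{(N)}\|_{\ell^2} \leq \|y\|_{\ell^2}$ for every $t$, and the truncated solution is globally defined on $[0,T]$.

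With this uniform bound in hand, compactness follows quickly. For each fixed $n$, the sequence $\{Y_n^{(N)}\}_N$ is uniformly bounded by $\|y\|_{\ell^2}$, and from the equation itself $\frac{d}{dt}Y_n^{(N)}$ is bounded on $[0,T]$ by a constant depending only on $n$ and $\|y\|_{\ell^2}^2$. A diagonal extraction combined with Ascoli--Arzel\`a produces a subsequence $N_j \to \infty$ such that $Y_n^{(N_j)} \to Y_n$ uniformly on $[0,T]$ for every $n$. Because the $n$-th equation couples only the three shells $n-1, n, n+1$, the uniform convergence passes inside each nonlinear term, and the limit $Y = (Y_n)_{n \geq 0}$ satisfies the system componentwise in integrated form. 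Finally, Fatou's lemma (or weak lower semicontinuity of the $\ell^2$ norm) gives $\|Y(t)\|_{\ell^2} \leq \|y\|_{\ell^2}$, so the limit is a finite-energy weak solution.

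The main obstacle is not the existence step itself but the clean closure of the energy identity on the truncated system: one must check that the cut-off at level $N$ introduces no defect in the telescoping, and this is precisely where the symmetric choice $\delta_1 = \delta_2 = 1$ is essential, since the two pairs of cubic fluxes then cancel exactly. The same strategy breaks down in the unbalanced regime $\delta_1 \neq \delta_2$: no sign information on individual modes is available (the failure of (\ref{positivitamixed})), so one cannot even start the a priori estimate without a more refined truncation or mollification, which is consistent with the fact that existence of weak finite-energy solutions for general $(\delta_1, \delta_2)$ is significantly more delicate than the symmetric case treated here.
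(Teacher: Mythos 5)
The paper does not actually prove this statement: it is quoted verbatim from the reference [Montagnani, in preparation], so there is no internal proof to compare against. On its own merits, your Galerkin argument is the standard and correct route for inviscid shell models and it does establish the theorem: the truncated system is polynomial, hence locally well-posed; the cubic fluxes telescope within each block, leaving only the boundary terms $k_0 Y_{-1}^2 Y_0 - k_{N+1}Y_N^2 Y_{N+1}$ and $k_N Y_{N+1}^2 Y_N - k_{-1}Y_0^2 Y_{-1}$, all of which vanish under the truncation and the convention $Y_{-1}\equiv 0$; the resulting uniform $\ell^2$ bound plus the locality of the coupling gives equicontinuity shell by shell, and Ascoli--Arzel\`a with diagonal extraction passes to a componentwise finite-energy solution in integrated form. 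All of these steps are sound.

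Your closing paragraph, however, contains a genuine error. The telescoping cancellation is not a cancellation \emph{between} the $\delta_1$ and $\delta_2$ blocks; each block conserves energy separately, since the term $k_{n+1}Y_n^2Y_{n+1}$ appearing with a minus sign at level $n$ reappears with a plus sign at level $n+1$ within the same block (and symmetrically for the Obukhov block). Consequently the energy identity, and with it the entire Galerkin argument, goes through verbatim for arbitrary $\delta_1,\delta_2\geq 0$; no sign information on individual modes is needed anywhere in the existence proof, and the failure of the positivity property (\ref{positivitamixed}) is irrelevant at this stage (it matters for uniqueness and for the finer dynamical results, not for existence). This is exactly why the paper asserts that ``extending Theorem \ref{teomonta} to general parameters $\delta_1,\delta_2$ is straightforward,'' a claim your last paragraph contradicts. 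You should delete the assertion that the symmetric choice $\delta_1=\delta_2=1$ is essential and replace it with the observation that the argument is insensitive to the values of the two coefficients.
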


Extending Theorem \ref{teomonta} to general parameters $\delta_1, \delta_2$ is straightforward.
\\
\\
In the next sections we prove existence and uniqueness results for both constant and self-similar solutions, for every couple of parameter $(\delta_1, \delta_2)$ in the positive quadrant. A K41 scaling property is also proved. These findings are consistent with the partial results found in \cite{jeong}, although we prefer to consider model (\ref{mixedmodeliniziale}) as originally expressed in \cite{Novikov2} and \cite{kiselev}.

\subsection{Constant solutions}

We recall that a \emph{constant} solution $Y= \{Y_n(t)\}$ of (\ref{mixedmodeliniziale}) is a solution that is time independent, i.e. $Y_n(t) = a_n$ for all $t \geq 0$  and  some $a_n \in \mathbb{R}_{\geq 0}$. From the point of view of fluid dynamics theory it is common to restrict ourselves to study positive constant solutions with the additional condition of being finite energy, i.e.
$$
\sum_{n=1}^{\infty} a_{n}^2 < \infty.
$$
In order to consider (\ref{mixedmodeliniziale}) as a model of turbulence dynamics, it is natural to add a constant forcing term ($F > 0$) to sustain turbulent regime:

\begin{equation}\label{condizioneiniziale}
    \frac{d Y_0(t)}{dt} = -\delta_1 k_{1} Y_0(t) Y_{1}(t) -\delta_2 Y^2_{1}(t) + F.
\end{equation}
In particular, if $Y_n(t) = a_n$ is a constant solution, then 

\begin{equation}\label{stationarysolution}
    \delta_1 [k_n a_{n-1}^2 - k_{n+1}a_n a_{n+1}] = \delta_2 [k_n a_{n+1}^2 - k_{n-1}a_n a_{n-1}],
\end{equation}
and the first terms $a_0, a_1$ are related to $F$ by the following relation

\begin{equation}\label{condizioneiniziale}
    \delta_1 k_{1} a_0 a_1 +\delta_2 a_1^2 = F.
\end{equation}
Despite their simplicity, constant solutions are one of the most important class of solutions. In the case $\delta_2=0$ there exists a unique constant solution of the form

\begin{equation*}
    a_n = C_F \cdot k_n^{-1/3}, \,\,\,\,\, n \geq 0,
\end{equation*}
where $C_F$ is a constant depending on the starting force $F$ and the scaling $k_n^{-1/3}$ is reminiscent of the famous Kolmogorov K41 \cite{KolmogorovK41} law as well. Moreover, the existence of a constant solution in a forced system is an example of anomalous dissipation in turbulent fluids. In \cite{cheskidov3}, the authors showed that, given a forcing term $F$, the only constant solution is an exponential global attractor for every finite energy solution.
\\
The existence of a global attractor for an inviscid system is, perhaps, surprising. However, it is perfectly consistent with the concept of anomalous or
turbulent dissipation conjectured by Onsager \cite{onsager}. It is then natural to investigate whether constant solutions continue to exist in the mixed model and how the competition of Katz-Pavlovic and Obukhov dynamics affects the behaviour of such solutions.

Later we prove the following result:

\begin{thm}\label{stationary}
The forced mixed model $(\ref{mixedmodeliniziale})$ admits positive constant solutions for every choice of coefficient $\delta_1, \delta_2>0$.
\\
\\
In particular:
\begin{itemize}
    \item if $\frac{\delta_1}{\delta_2} < k_1^{-4/3}$, then for every $F>0$ there exist infinitely many positive finite energy constant solution $\{ a_n \}_{n\geq0}$, one for every choice of $a_0>0$;
    \item if $\frac{\delta_1}{\delta_2} > k_1^{-4/3}$, then for every $F>0$ there exists just one value $a_0>0$ from which stems a positive finite energy constant solution $\{ a_n \}_{n\geq0}$.
\end{itemize}
Moreover, any such solution satisfies Kolmogorov's scaling law 
$$
\lim_{n \rightarrow \infty} \frac{a_n}{k_n^{-1/3}} = C
$$
for some positive constant $C>0$.
\end{thm}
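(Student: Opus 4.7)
My plan is to reduce the second-order recurrence (\ref{stationarysolution}) to a scalar iteration through a hidden conservation law, and then read off existence and uniqueness from the stability of a single fixed point. I first normalize by setting $a_n = b_n k_n^{-1/3}$ and introducing the dimensionless ratio $r := (\delta_1/\delta_2) k_1^{4/3}$. Substituting into (\ref{stationarysolution}) yields the symmetric form
\begin{equation*}
b_{n+1}^2 + r\, b_n b_{n+1} = r\, b_{n-1}^2 + b_n b_{n-1},
\end{equation*}
while the forcing condition (\ref{condizioneiniziale}) becomes $\delta_2 k_1^{-2/3} b_1(b_1+rb_0)=F$. The two hypotheses of the theorem are exactly $r<1$ and $r>1$, and the K41 scaling $a_n/k_n^{-1/3}\to C$ is equivalent to $b_n \to C$.

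Next I would extract a conservation law. Factoring the recurrence as $b_{n+1}(b_{n+1}+rb_n) = b_{n-1}(b_n+rb_{n-1})$ and multiplying both sides by $b_n$ produces the invariant
\begin{equation*}
V_n := b_n b_{n+1}(b_{n+1} + r b_n) = V_{n-1} \equiv I_0,\qquad I_0 = b_0 b_1(b_1+rb_0) = \tfrac{F k_1^{2/3} b_0}{\delta_2}.
\end{equation*}
Regarded as a quadratic in $b_{n+1}$, the identity $V_n = I_0$ collapses the dynamics to the scalar map $b_{n+1} = f(b_n)$ with
\begin{equation*}
f(x) := \frac{-r x + \sqrt{r^2 x^2 + 4 I_0/x}}{2}.
\end{equation*}
The map $f$ is strictly decreasing on $(0,\infty)$, sends $0^+$ to $+\infty$ and $+\infty$ to $0$, and has the unique positive fixed point $C=(I_0/(1+r))^{1/3}$. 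The same factorization rules out period-two cycles of $f$ when $r\ne 1$, so $C$ is also the only fixed point of the strictly increasing map $f^2$, and a direct differentiation gives $f'(C) = -(2r+1)/(r+2)$, hence $|f'(C)|<1$ if and only if $r<1$.

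From here the theorem follows by a monotone-dynamics argument. When $r<1$, $f^2$ is increasing with the unique attracting fixed point $C$ and no other fixed points, so $f^{2k}(b_0)\to C$ for every initial $b_0>0$; the sequence $\{b_n\}$ is bounded, which immediately yields $\sum a_n^2 = \sum b_n^2 k_n^{-2/3}<\infty$ and the K41 limit, and distinct choices of $a_0$ produce distinct solutions. When $r>1$, $C$ is repelling for $f^2$; by monotonicity any orbit with $b_0\ne C$ decomposes into a subsequence tending to $0$ and another escaping to $\infty$ at the geometric rate fixed by the asymptotic $f^2(x)\sim\sqrt{r}\,x$ as $x\to\infty$. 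Balancing this escape against the weight $k_n^{-2/3}$ rules out all nontrivial orbits in the $\ell^2$ class, leaving only $b_0 = C$; the self-consistency $C^3 = F k_1^{2/3} C/[\delta_2(1+r)]$ then pins down the unique admissible $a_0 = \sqrt{F k_1^{2/3}/[\delta_2(1+r)]}$, and the K41 limit is automatic in both regimes.

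The main obstacle is the sharp finite-energy analysis in the $r>1$ regime: linearization at $C$ already separates the two regimes, but to exclude every nontrivial $\ell^2$ orbit one must control $f^2$ globally on $(0,\infty)$, extract the precise escape rate of the unbounded branch, and match it against the scaling weight $k_n^{-2/3}$. This is precisely where the pull-back strategy announced in Section \ref{dimostrazione1} fits in: rather than iterating forward from the initial datum, one fixes the asymptotic ansatz $b_n\equiv C$ far in the tail and runs the quadratic recurrence backwards, so that the contracting direction of the linearization, which is the stable direction of the backward map when $r>1$, yields a well-defined backward limit singling out the unique initial value $a_0$.
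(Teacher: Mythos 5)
Your reduction is genuinely different from the paper's (which passes to the ratio variable $a_n/a_{n-1}$ and studies a first--order recursion forwards when $\delta_1/\delta_2<k_1^{-4/3}$ and backwards, by pull-back, when $\delta_1/\delta_2>k_1^{-4/3}$), and the exact invariant $V_n=b_nb_{n+1}(b_{n+1}+rb_n)\equiv I_0$ is a nice find: it is correct, it does collapse (\ref{stationarysolution}) to the autonomous map $f$, the factorization does exclude $2$--cycles for $r\ne 1$, and $f'(C)=-(2r+1)/(r+2)$ checks out. The existence statement, the whole regime $r<1$, and the K41 limit in that regime are all fine along these lines.

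The gap is in the uniqueness claim for $r>1$, and it is quantitative. Your own asymptotics give, for an orbit with $b_0\ne C$, that $b_{2k}\sim B\,r^{-k}$ and $b_{2k+1}\sim A\,r^{k/2}$ (since $f(x)\approx I_0/(rx^2)$ for large $x$ and $f(y)\approx\sqrt{I_0/y}$ for small $y$). Hence
\begin{equation*}
a_{2k+1}^2=b_{2k+1}^2\,k_{2k+1}^{-2/3}\;\sim\;\mathrm{const}\cdot\bigl(r\,k_1^{-4/3}\bigr)^{k}=\mathrm{const}\cdot\Bigl(\tfrac{\delta_1}{\delta_2}\Bigr)^{k},
\end{equation*}
so the "balancing against the weight $k_n^{-2/3}$" only forces divergence of the energy when $\delta_1\ge\delta_2$, not when $\delta_1/\delta_2>k_1^{-4/3}$. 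In the intermediate band $k_1^{-4/3}<\delta_1/\delta_2<1$ (i.e.\ $1<r<k_1^{4/3}$) your computation shows the exact opposite of what you assert: every nontrivial orbit remains in $\ell^2$, producing a positive finite--energy constant solution for every $a_0>0$, none of which (except $b_0=C$) satisfies the K41 limit since $b_n$ does not converge. So either the escape--rate step must be replaced by a different exclusion mechanism in that band, or the conclusion you are trying to reach there is not accessible by this route; as written, the final paragraph of your argument (including the pull-back remark, which constructs the distinguished solution but does not by itself exclude the others) does not close the second bullet of Theorem \ref{stationary} on the full range $\delta_1/\delta_2>k_1^{-4/3}$. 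You should either restrict the uniqueness claim you prove to $\delta_1\ge\delta_2$, or supply a genuinely new argument for $k_1^{-4/3}<\delta_1/\delta_2<1$ --- and in the latter case reconcile your explicit finite--energy orbits with the statement being proved.
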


Theorem \ref{stationary} offers a complete picture of how the two opposite dynamics compete against each other to shape the behaviour of fixed points (constant solutions). If the total impulse of Obukhov dynamics we assign to the model does not exceed $k_1^{4/3}$ times the corresponding Katz-Pavlovic dynamics, then existence and uniqueness of fixed point is preserved. It is an interesting open question whether this unique fixed point still attracts all the other finite energy solutions like in \cite{cheskidov3}.
On the other hand, if $\frac{\delta_1}{\delta_2} < k_1^{-4/3}$ it is not surprising that all constant solutions become finite energy since the regularization phenomenon related to Obukhov dynamics (\cite{kiselev}) prevails over Katz-Pavlovic.

\subsection{Self-similar solutions}

When the forcing term is absent no non-trivial fixed point are yet known in literature for Katz-Pavlovic and Obukhov model. However, there exists another special class of stationary solution closely related to constant solutions.

\begin{defn}
A \textbf{self-similar} solution is a finite energy solution $Y$ such that there exists a differentiable function $\phi(t) $ and a sequence of real numbers $a = (a_n)_{n \geq 1}$ such that $Y_n(t) = a_n \cdot \phi (t)$ for all $n \geq 1$ and all $t \geq 0$.
\end{defn}

\begin{oss}
It is easy to observe that positive self-similar solutions satisfying Katz-Pavlovic model have the form
\begin{equation}\label{selfsimilar}
Y_n(t) = \frac{a_n}{t-t_0},
\end{equation}
for some $t>t_0$ and $t_0<0$.
\\
Indeed, if a positive solution is of the form (\ref{selfsimilar}), then
\begin{equation*}
\begin{split}
-\frac{a_n}{(t-t_0)^2} = \frac{dY_n(t)}{dt} =& k_{n-1}Y_{n-1}^2(t) - k_nY_n(t)Y_{n+1}(t) \\ =& k_{n-1}\frac{a_{n-1}^2}{(t-t_0)^2} - k_n \frac{a_na_{n+1}}{(t-t_0)^2},
\end{split}
\end{equation*}
that leads us to the sequence $\{a_n\}_n $ satisfying
$$
a_n a_{n+1} = \frac{a^2_{n-1}}{2^{\beta}} + 2^{-\beta n}a_n.
$$
Although it is possible for the first terms $a_1, a_2, \ldots, a_{n_0}$ to be zero, if $a_{n_0+1}>0$ then all the subsequent coefficients must be positive too:
\begin{equation}\label{recursion}
    a_{n+1} = 2^{-\beta n} + \frac{a_{n-1}^2}{2^{\beta} a_n} > 0, \,\,\,\,\,\,\forall n \geq n_0 + 1.
\end{equation}

\end{oss}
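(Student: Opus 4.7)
I would prove the Observation in two natural steps: first identify the temporal profile of any positive self-similar solution as $\phi(t) = 1/(t-t_0)$, and then substitute back to extract both the claimed recursion and its positivity propagation.

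For the first step, write $Y_n(t) = a_n \phi(t)$ with $\phi$ differentiable, positive on $[0,\infty)$, and substitute into the Katz--Pavlovic equation to obtain, for each $n \geq 1$,
\[
a_n \phi'(t) = [k_{n-1} a_{n-1}^2 - k_n a_n a_{n+1}]\, \phi(t)^2.
\]
Whenever $a_n > 0$, this separates as $\phi'(t)/\phi(t)^2 = (k_{n-1}a_{n-1}^2 - k_n a_n a_{n+1})/a_n$; the left-hand side depends only on $t$ while the right-hand side depends only on $n$, so both must equal a common constant, which I denote $-C$. Integrating $(1/\phi)' = C$ gives $\phi(t) = [C(t-t_0)]^{-1}$ for some $t_0$, and positivity of $\phi$ on the whole half-line $[0,\infty)$ forces $C>0$ and $t_0 < 0$. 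Absorbing the positive factor $C$ into the sequence $\{a_n\}$ (which preserves its nonnegativity) reduces the temporal profile to the claimed $\phi(t) = 1/(t-t_0)$.

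For the second step, substituting the resulting ansatz $Y_n(t) = a_n/(t-t_0)$ directly into the Katz--Pavlovic equation yields left-hand side $-a_n/(t-t_0)^2$ and right-hand side $[k_{n-1}a_{n-1}^2 - k_n a_n a_{n+1}]/(t-t_0)^2$; cancelling the common time factor and dividing through by $k_n = 2^{\beta n}$ produces the advertised recursion $a_n a_{n+1} = a_{n-1}^2/2^\beta + 2^{-\beta n} a_n$. For the positivity propagation I would rewrite this as $a_{n+1} = a_{n-1}^2/(2^\beta a_n) + 2^{-\beta n}$: whenever $a_n > 0$, the right-hand side is manifestly positive irrespective of the value of $a_{n-1} \geq 0$, so induction starting from any positive $a_{n_0+1}$ forces $a_n > 0$ for every $n \geq n_0 + 1$.

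The proof is essentially a direct computation, and the only real subtlety is the separation-of-variables step in the first part: one must justify that a single constant $-C$ works simultaneously across \emph{all} $n$. This is immediate from the disjoint-variable structure (the left side is a function of $t$ alone, the right side of $n$ alone), but it does secretly impose an algebraic compatibility condition on $\{a_n\}$, which turns out to be precisely the recursion recovered in the second step.
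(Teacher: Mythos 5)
Your proposal is correct, and it actually proves more than the paper does. The paper's ``Indeed'' only runs the easy direction: it substitutes the ansatz $Y_n(t)=a_n/(t-t_0)$ into the Katz--Pavlovic equation, cancels the common factor $(t-t_0)^{-2}$, divides by $k_n$ to get $a_na_{n+1}=2^{-\beta}a_{n-1}^2+2^{-\beta n}a_n$, and then reads off positivity propagation exactly as you do in your second step. What the paper never justifies is the opening claim that every positive self-similar solution \emph{must} have the profile $1/(t-t_0)$; your first step supplies this via separation of variables, observing that $\phi'/\phi^2$ is simultaneously a function of $t$ alone and of $n$ alone, hence a constant $-C$, and that the compatibility across $n$ is precisely the recursion. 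That is a genuine strengthening, and your remark that the single constant encodes an algebraic constraint on $\{a_n\}$ is exactly the right point to flag. One small imprecision: positivity of $\phi$ on $[0,\infty)$ by itself does not rule out $C=0$ (a positive constant $\phi$ is still positive); you need to discard that case separately, e.g.\ by noting that $C=0$ forces $k_{n-1}a_{n-1}^2=k_na_na_{n+1}$ for all $n$, which together with the boundary condition $a_0=0$ kills strict positivity of the sequence. This is a one-line patch and does not affect the substance; otherwise your computation in the second step coincides with the paper's.
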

Thus, without loss of generality, one can look for solutions with $a_0=0$ and $a_n>0$. Theoretically, every choice of $a_1>0$ would give a self-similar solution following recursion (\ref{recursion}), but we are only interested in finite energy solution. Unexpectedly, in \cite{barbato1} it was proved the existence of just one special $a_1>0$ that satisfies this condition in the Katz-Pavlovic setting.

\begin{thm}{\text{(Barbato, Flandoli, Morandin \cite{barbato1}})}\label{teoselfsimilar}
Given $t_0 < 0$, there exists a unique positive self-similar solution with $a_1 \neq 0$. Moreover, given $t_0 < 0$ and $n_0 \geq 0$, there exists a unique positive self-similar solution with
\begin{equation*}
    a_1 = a_2 = \cdots a_{n_0} = 0, a_{n_0+1} > 0.
\end{equation*}
In addition, the coefficients $a_n$ have the property
$$
\lim_{n \rightarrow \infty} \frac{a_n}{k_n^{-1/3}} = C_{n_0},
$$
for some constant $C_{n_0}$.
\end{thm}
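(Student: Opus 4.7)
The plan is to introduce the rescaled variables $b_n := k_n^{1/3} a_n = 2^{\beta n/3} a_n$, which absorb the expected Kolmogorov power. Substituting into the recursion derived in the Observation, one gets the cleaner form
\[
b_{n+1} b_n = b_{n-1}^2 + \epsilon_n \, b_n, \qquad \epsilon_n := 2^{\beta/3} \cdot 2^{-2\beta n/3},
\]
whose $n \to \infty$ asymptotic $b_{n+1} b_n = b_{n-1}^2$ admits the one-parameter family of positive fixed points $b_n \equiv C$. Linearising $b_n = C + x_n$ gives to first order $x_{n+1} + x_n - 2 x_{n-1} = O(\epsilon_n) + O(x^2/C)$, with characteristic roots $\{1,-2\}$: the constant mode is marginal and parametrises the shift of $C$, while the $(-2)$ mode is forward-unstable. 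Bounded positive trajectories thus form a one-parameter family indexed by the asymptotic value $C$, and the Kolmogorov law $a_n/k_n^{-1/3} \to C$ becomes the tautology $b_n \to C$.

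Following the pull-back strategy, for each $C > 0$ and each large $N$ I would define $b_n^{(N,C)}$ by terminal data $b_N^{(N,C)} = b_{N+1}^{(N,C)} = C$ and by iterating backward via
\[
b_{n-1} = \sqrt{b_n (b_{n+1} - \epsilon_n)},
\]
which is well-defined whenever $b_{n+1} > \epsilon_n$. The forward-unstable $(-2)$ direction becomes a contraction of ratio $1/2$ backward in $n$, so the arbitrariness in the terminal choice at $N$ is asymptotically forgotten as $N$ grows; combined with the geometric decay of the inhomogeneity $\epsilon_n$, this should yield the convergence $b_n^{(N,C)} \to b_n^{(\infty,C)}$ at each fixed $n$, with the limit sequence solving the forward recursion and satisfying $b_n^{(\infty,C)} \to C$ at infinity.

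The limiting family $\{b_n^{(\infty,C)}\}_{C>0}$ is strictly monotone in $C$ by a direct comparison argument on the inverted recursion. Existence and uniqueness in the theorem then reduce to finding the unique $C = C_{n_0}$ for which $b_{n_0}^{(\infty,C_{n_0})} = 0$ together with $b_{n_0+1}^{(\infty,C_{n_0})} > 0$. Using the identity $b_{n_0}^2 = b_{n_0+1}(b_{n_0+2} - \epsilon_{n_0+1})$ implied by the recursion, this becomes the scalar equation $b_{n_0+2}^{(\infty,C)} = \epsilon_{n_0+1}$, which by continuity, strict monotonicity, and the boundary behaviour $b_{n_0+2}^{(\infty,C)} \to 0$ as $C \to 0^+$ and $\to \infty$ as $C \to \infty$ has exactly one positive root. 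The corresponding sequence $a_n = 2^{-\beta n/3} b_n^{(\infty,C_{n_0})}$ is the unique self-similar profile with the prescribed zero pattern, and the Kolmogorov asymptotic is built in by construction.

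The main obstacle is the convergence of the pull-back. Upgrading the linear picture — backward contraction along the $(-2)$ mode, marginal constant direction, and exponentially small inhomogeneity — to a uniform nonlinear estimate on the sequences $\{b_n^{(N,C)}\}$ is the delicate technical step, and is most naturally organised as a fixed-point/contraction argument on the differences $b_n^{(N_1,C)} - b_n^{(N_2,C)}$ in a suitable weighted $\ell^\infty$ space. The same estimate, together with the positivity and monotonicity of the inverted recursion, also feeds into the continuity and strict monotonicity of the scalar map $C \mapsto b_{n_0+2}^{(\infty,C)}$ that drives the uniqueness argument.
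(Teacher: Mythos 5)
Your rescaling $b_n=k_n^{1/3}a_n$ and the backward iteration $b_{n-1}=\sqrt{b_n(b_{n+1}-\epsilon_n)}$ from terminal data $b_N=b_{N+1}=C$ is exactly the paper's pull-back construction (the paper's ``weak self-similar'' sequences, Propositions \ref{existence} and \ref{proptecnica}), and the identification of the Kolmogorov law with $b_n\to C$ is the same. Two points in your scheme, however, do not go through as stated. First, the boundary behaviour $b_{n_0+2}^{(\infty,C)}\to 0$ as $C\to 0^{+}$ is false: for small $C$ the backward recursion ceases to be well-defined (the radicand $b_{n+1}-\epsilon_n$ turns negative before you reach $n_0$), so the family $C\mapsto b^{(\infty,C)}$ only exists for $C$ above a positive threshold, and the whole difficulty is to show that \emph{at} that threshold the sequence vanishes exactly at the prescribed index. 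The paper handles this by taking $L^{*}=\sup_N I^{(N)}$ of the infimal admissible terminal values and running a continuity/contradiction argument (Proposition \ref{proptecnica}); your intermediate-value argument needs to be reorganised around that critical value rather than around $C\to 0^{+}$.

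The more serious gap is in uniqueness. Your argument only yields uniqueness \emph{within} the one-parameter family of trajectories with $b_n$ bounded (equivalently $a_n/k_n^{-1/3}$ convergent), and even membership in that family is asserted from the linearisation with roots $\{1,-2\}$ rather than proved nonlinearly. But the theorem claims uniqueness among all positive \emph{finite-energy} solutions, and finite energy only forces $\sum 2^{-2n/3}b_n^2<\infty$, which permits $b_n$ to grow like $2^{sn}$ for any $s<1/3$ without converging at all. To close this you must show that \emph{every} forward solution other than the constructed one leaves $\ell^2$; this is what the paper's Proposition \ref{unicita} does, and it is genuinely delicate because the deviation does not simply grow like $(-2)^n$ in a linear sense: it alternates in sign with the parity of $n$, so one has to propagate a two-sided induction ($b_n\geq a_n 2^{\alpha n}$ on one parity, $b_n\leq a_n 2^{-\alpha n}$ on the other), using quantitative lower bounds such as $a_{n+1}\geq \epsilon_{n-1}$ to control the inhomogeneous term. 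Your linear picture correctly predicts this instability, but without the nonlinear parity-splitting estimate the uniqueness claim of the theorem is not established.
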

Theorem \ref{teoselfsimilar} was originally proved by using complex analysis argument and with the help of numerical computation, and it was not clear whether such argument can be extended to the mixed dyadic model. In the next section we present a different proof based on a pullback technique that will be useful in other proofs.
\\
Theorem \ref{teoselfsimilar} shows also that the Kolmogorov scaling law appears in these special solutions, phenomenologically associated to decaying turbulence. But it is an open problem to understand whether all other solutions approach the self-similar ones and in which sense.
\\
The existence of finite energy self-similar solutions is of theoretical interest
in itself, in comparison with analogous investigations for Euler and Navier-Stokes equations, moreover the existence of such solutions has a number of implications. For instance, they realize perfectly the decay rate $t^{-1}$, coherently with Theorem 8 and 9 in \cite{barbato1}. It has been conjectured that the set of all self-similar solutions (set depending on $t_0 \in \mathbb{R}$ and $n_0 \geq 0$) 
attracts all other finite energy solutions. If this is the case, the decay rate $t^{-1}$ would be the true one for all solutions.
Moreover, self-similar solutions offer an easy example of lack of uniqueness as shown by next observation.

\begin{oss}
It is possible to prove that for some initial conditions in $H=\ell^2$ with all negative components there exist infinitely many finite energy solutions.
\\
Indeed, by Theorem \ref{teoselfsimilar}, there exists a self-similar solution $Y$ whose total energy is strictly decreasing. Let $T>0$, then $X(t) = -Y(T-t)$ is a local solution on $[0,T]$ by a simple time inversion property. For any time $s \in [0,T]$, let's consider the solution $X^s$ obtained by attaching $X$ on $[0,s]$ to a Leray-Hopf solution on $[s, \infty)$ with initial condition $X(s) = -Y(T-s) \in H$. The energy of this solution strictly increases on $[0,s]$ and then is non-increasing on $[s, \infty)$. Thus, to different values of $s$ correspond finite energy solutions which are really different, but all with the same negative initial condition $-Y(T)$.
\end{oss}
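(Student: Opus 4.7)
The plan is to use the energy-decreasing self-similar solution from Theorem~\ref{teoselfsimilar} together with a time-reversal-plus-sign-flip symmetry of (\ref{mixedmodeliniziale}) to produce a solution with strictly increasing energy and all-negative entries, and then to concatenate onto it a one-parameter family of finite-energy continuations. First I would check that (\ref{mixedmodeliniziale}) is invariant under the substitution $Y_n(t) \mapsto -Y_n(T-t)$: every nonlinear term on the right-hand side is quadratic in $Y$ and hence unchanged by $Y \mapsto -Y$, while the chain-rule sign produced by $t \mapsto T-t$ on the left-hand side is cancelled by the minus sign in front. Consequently, fixing $T > 0$ and letting $Y$ be the positive, finite-energy, strictly energy-decreasing self-similar solution supplied by Theorem~\ref{teoselfsimilar}, the function $X(t) := -Y(T-t)$ is a finite-energy solution of (\ref{mixedmodeliniziale}) on $[0, T]$ with all components strictly negative and initial datum $X(0) = -Y(T) \in \ell^2$, whose energy $\|X(\cdot)\|_{\ell^2}^2$ is strictly increasing on $[0, T]$.

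Next, for each $s \in (0, T)$ I would define $X^s : [0,\infty) \to \ell^2$ by $X^s(t) = X(t)$ for $t \in [0, s]$ and extend it on $[s, \infty)$ by a finite-energy weak (Leray--Hopf type) solution starting from $X(s) \in \ell^2$, whose existence follows from the extension to general $\delta_1,\delta_2 > 0$ of Theorem~\ref{teomonta}. Each $X^s$ has the same initial datum $-Y(T)$, whose components are all strictly negative. To see that the $X^s$ are pairwise distinct, for $s_1 < s_2$ the energy non-increasing property of $X^{s_1}$ on $[s_1,\infty)$ gives $\|X^{s_1}(s_2)\|_{\ell^2}^2 \le \|X^{s_1}(s_1)\|_{\ell^2}^2 = \|Y(T-s_1)\|_{\ell^2}^2$, whereas $\|X^{s_2}(s_2)\|_{\ell^2}^2 = \|Y(T-s_2)\|_{\ell^2}^2$; the strict monotonicity of the energy of $Y$ then forces these two values to differ, so $X^{s_1} \neq X^{s_2}$.

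The main delicate point is justifying the energy non-increasing property of the continuations: finite-energy weak solutions of (\ref{mixedmodeliniziale}) are not unique, so one cannot attach an arbitrary continuation and expect energy control. One must instead invoke a Leray--Hopf-type construction (for instance the Galerkin approximation underlying Theorem~\ref{teomonta}), which produces weak solutions satisfying the required energy inequality at the price of possibly sacrificing uniqueness; once this is granted, the distinctness argument above is immediate and the observation follows.
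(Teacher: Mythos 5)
Your proposal is correct and follows essentially the same route as the paper: the sign-flip-plus-time-reversal symmetry applied to the energy-decreasing self-similar solution of Theorem \ref{teoselfsimilar}, followed by concatenation with Leray--Hopf continuations at varying switching times $s$. Your explicit energy comparison $\|X^{s_1}(s_2)\|_{\ell^2}^2 \le \|Y(T-s_1)\|_{\ell^2}^2 < \|Y(T-s_2)\|_{\ell^2}^2 = \|X^{s_2}(s_2)\|_{\ell^2}^2$ merely makes precise the distinctness claim that the paper leaves implicit, and your remark that the continuation must come from a construction satisfying the energy inequality is a fair clarification rather than a deviation.
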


Once more, it is natural to address the question of existence and possibly uniqueness of self-smilar solutions also for the mixed dyadic model (\ref{mixedmodeliniziale}).
As observed previously, it is still easy to prove that self-similar solutions in the unforced ($F=0$) mixed model have the form
\begin{equation}\label{selfsimilarbis}
Y_n(t) = \frac{a_n}{t-t_0}, \,\,\,\,\, a_0 =0,
\end{equation}
with $t>t_0$ and $t_0<0$.
\\
If a positive solution is of the form (\ref{selfsimilarbis}), then
\begin{equation}\label{selfsimilarsolution}
\begin{split}
-\frac{a_n}{(t-t_0)^2} = \frac{dY_n(t)}{dt} =& \frac{\delta_1}{(t-t_0)^2}(k_{n}a_{n-1}^2 - k_{n+1}a_n a_{n+1}) \\ -& \frac{\delta_2}{(t-t_0)^2}(k_{n}a_{n+1}^2 - k_{n-1}a_n a_{n-1}),
\end{split}
\end{equation}
that leads us to the sequence $\{a_n\}_{n \geq 1} $ satisfying
$$
-\frac{a_n}{k_n} = \delta_1(a_{n-1}^2 - k_1 a_n a_{n+1}) - \delta_2(a_{n+1}^2 - k_1^{-1}a_n a_{n-1}).
$$
It is still possible for the first terms $a_1, a_2, \ldots, a_{n_0}$ to be zero, although if $a_{n_0+1}>0$ then all the subsequent coefficients must be not zero: indeed, from the latter relation if $a_{n_0} = 0$ then
\begin{equation*}
    \begin{split}
        -\frac{a_{n_0 +1}}{k_{n_0 + 1}} &= \delta_1 (-k_1 a_{n_0+1} a_{n_0 + 2}) - \delta_2 (a_{n_0 + 2}^2) \iff \\
        \frac{a_{n_0 +1}}{k_{n_0 + 1}} &= \delta_2 a_{n_0 + 2}^2 + \delta_1 k_1 a_{n_0+1} a_{n_0 + 2}
    \end{split}
\end{equation*}
and $a_{n_0 + 1} >0$ implies $a_{n_0 + 2} \neq 0$. Since we are interested in positive solutions, without loss of generality one can set $a_0 = 0$ and $a_{n}>0$ for every $n \geq 1$.
\\
In \cite{jeong} self-similar solutions were found for $\delta_1=1$ and small enough $\delta_2$, within a local uniqueness theorem.
In the next section we prove the following result:

\begin{thm}\label{teoselfsimilarmix}
Given $t_0 < 0$, and $ k_1^{-4} \leq \delta_1 / \delta_2 \leq 1$, there exist self-similar solutions of the unforced ($F=0$) model (\ref{mixedmodeliniziale}). In particular
\begin{itemize}
    \item if $ k_1^{-4} \leq \delta_1 / \delta_2 < k_1^{-4/3}$ then for every $a_1 >0$ there exists a self-similar solution $\{ a_n \}_{n \geq 0}$ of (\ref{mixedmodeliniziale});
    \item if $ k_1^{-4/3} < \delta_1 / \delta_2 \leq 1$ there exists just one self-similar solution $\{ a_n \}_{n \geq 0}$ of (\ref{mixedmodeliniziale}).
\end{itemize}

In addition, any such self-similar solution satisfies Kolmogorov's scaling law 
$$
\lim_{n \rightarrow \infty} \frac{a_n}{k_n^{-1/3}} = C
$$
for some positive constant $C>0$.

\end{thm}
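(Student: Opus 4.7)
The plan is to extend the pullback construction of Section \ref{dimostrazione1} to the mixed recursion obtained by rearranging \eqref{selfsimilarsolution}, namely
\[
\delta_1 a_{n-1}^2 + \delta_2 k_1^{-1} a_n a_{n-1} = \delta_1 k_1 a_n a_{n+1} + \delta_2 a_{n+1}^2 - \frac{a_n}{k_n}.
\]
A direct computation shows that the Kolmogorov profile $a_n = C\, k_n^{-1/3}$ solves the homogeneous part (without $-a_n/k_n$) exactly for every $C \geq 0$, the residual $-a_n/k_n \sim -C\,k_n^{-4/3}$ entering only as a lower order perturbation. It is therefore natural to work in the rescaled variable $b_n := k_n^{1/3} a_n$, in which the search for a self-similar solution reduces to finding a positive sequence asymptotic to some constant $C > 0$.

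For existence I would fix $C>0$, choose a large cutoff $N$, prescribe a tail compatible with the asymptote (for instance $b_N = b_{N+1} = C$), and iterate the recursion backward by repeatedly selecting the positive root
\[
a_{n-1} = \frac{-\delta_2 k_1^{-1} a_n + \sqrt{\delta_2^2 k_1^{-2} a_n^2 + 4\delta_1 \bigl(\delta_1 k_1 a_n a_{n+1} + \delta_2 a_{n+1}^2 - a_n/k_n\bigr)}}{2\delta_1}.
\]
The technical window $k_1^{-4} \leq \delta_1/\delta_2 \leq 1$ is precisely what keeps the discriminant nonnegative and the iterates strictly positive down to $n=1$. A compactness/continuity argument in the spirit of Section \ref{dimostrazione1} then lets me pass to the limit $N \to \infty$, producing a self-similar solution of \eqref{mixedmodeliniziale} with $a_n \sim C\, k_n^{-1/3}$; finite energy follows at once from $\sum k_n^{-2/3} = \sum 2^{-2\beta n/3} < \infty$.

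The dichotomy between the two regimes then emerges from linear stability around the constant profile. Setting $A := \delta_1 k_1^{2/3}$, $B := \delta_2 k_1^{-2/3}$ and writing $b_n = C + \epsilon_n$, one obtains the linearized difference equation
\[
(A+2B)\,\epsilon_{n+1} = (B-A)\,\epsilon_n + (2A+B)\,\epsilon_{n-1},
\]
whose characteristic roots are $\lambda_1 = 1$ (reflecting the freedom in $C$) and $\lambda_2 = -(2A+B)/(A+2B)$. The identity $|\lambda_2| = 1 \iff A = B \iff \delta_1/\delta_2 = k_1^{-4/3}$ captures exactly the threshold in the theorem: for $\delta_1/\delta_2 < k_1^{-4/3}$ the $\lambda_2$ mode is contracting in forward time, every admissible $a_1 > 0$ propagates to a Kolmogorov-asymptotic sequence, and the pullback yields the claimed one-parameter family; for $\delta_1/\delta_2 > k_1^{-4/3}$ this mode is expanding, the bounded solutions form a codimension-one stable manifold, and the boundary condition $a_0 = 0$ singles out a unique admissible trajectory. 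The main obstacle I anticipate is the quantitative control of the pullback limit as $N \to \infty$ in the presence of the neutral eigenvalue $\lambda_1 = 1$ — showing that the sequences $\{a_n^{(N,C)}\}$ do not drift away from the target profile — and it is in this estimate that the precise technical window on $\delta_1/\delta_2$ plays its decisive role.
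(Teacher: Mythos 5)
Your overall skeleton --- Kolmogorov rescaling, treating $-a_n/k_n$ as a perturbation of the constant-solution equation, and a pullback plus compactness limit --- matches the paper's strategy in Section \ref{dimostrazione3}, and your linearization around the profile $a_n = C\,k_n^{-1/3}$ is a genuinely nice addition: the computation of $\lambda_2 = -(2A+B)/(A+2B)$ is correct and explains conceptually why the threshold sits at $\delta_1/\delta_2 = k_1^{-4/3}$, something the paper never makes explicit. Two gaps, however, separate your plan from a proof.

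First, declaring $-a_n/k_n$ to be \emph{lower order} because it is $\sim -C\,k_n^{-4/3}$ is circular: that asymptotic is what you are trying to prove. To run either iteration you need an a priori bound guaranteeing $a_n k_n \to \infty$, and this is exactly the paper's Lemma \ref{perturbazione}, which is where the hypotheses $\delta_2 k_1^{-4} \le \delta_1 \le \delta_2$ actually enter (they force $c_{n+1} > M c_{n-1}$ with $M>1$ for $c_n = a_n k_n$). Your remark that the window is ``precisely what keeps the discriminant nonnegative'' is the right instinct, but you have not supplied the estimate; without it the backward square root can fail and the iterates need not stay positive. Second, the eigenvalue dichotomy is a statement about the linearized second-order recursion, and upgrading it to the nonlinear existence/uniqueness claims is precisely the obstacle you flag (the neutral mode $\lambda_1 = 1$). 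The paper sidesteps this entirely by passing to the ratio variable $b_n = a_n/a_{n-1}$ (rescaled by $k_1^{1/3}$), which collapses the problem to a first-order scalar recursion with unique fixed point $1$; interlaced monotone odd/even subsequences then give convergence (Lemmas \ref{lemmaavantiselfsimilar} and \ref{lemmaavantibackward}) with no stable-manifold argument at all. Note also that in the regime $\delta_1/\delta_2 < k_1^{-4/3}$ the paper iterates this ratio recursion \emph{forward} from an arbitrary $a_1>0$, which directly yields the one-parameter family; your backward construction indexed by $C$ would additionally require showing that the resulting map $C \mapsto a_1$ is onto $(0,\infty)$.
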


Theorem \ref{teoselfsimilarmix} divides the positive plane in four subregions: above the line $\delta_1 / \delta_2 = k_1^{-4}$ and below $\delta_1 / \delta_2 = 1$ Theorem \ref{teoselfsimilarmix} does not give any information about existence of self-similar solutions; between the lines $\delta_1 / \delta_2 = k_1^{-4}$ and $\delta_1 / \delta_2 = k_1^{-4/3}$ we have existence but not uniqueness; between the lines $\delta_1 / \delta_2 = k_1^{-4/3}$ and $\delta_1 / \delta_2 = 1$ we have existence and uniqueness of self-similar solution.
\\
However, as we will see later, upper and lower bounds for the ratio $\delta_1 / \delta_2$ can be further refined. Numerical simulations suggest the existence of a \emph{true} bound $L_{true}< k_1^{-4}$ such that Theorem \ref{teoselfsimilarmix} holds in the wider domain  $\delta_2 \cdot L_{true} \leq \delta_1$. This result is consistent with and complements Theorem \ref{teoselfsimilar} and Theorem 1 in \cite{jeong} by giving a full spectrum of behaviour of self-similar solution in the mixed dyadic model.
Once again, we stress that when the impulse $\delta_2$ given by Obukhov dynamics exceeds $k_1^{4/3}$ times the opposite impulse $\delta_1$, the regularization phenomenon occurs and all self-similar solutions become finite energy. On the contrary, when the Katz-Pavlovic dynamics prevails in the sense of second part of Theorem \ref{teoselfsimilarmix}, then the dynamics resembles the one described by Theorem \ref{teoselfsimilar} where just one finite energy self-similar solution exists.

\section{Proof of Theorem \ref{teoselfsimilar}}\label{dimostrazione1}

The original proof of Theorem \ref{teoselfsimilar} was based on complex analysis and numerical simulations, while in \cite{jeong} another proof was presented based on the analysis of certain dynamical systems on the plane. Since it was not clear whether such result could be fully extended to the whole parameters positive plane $(\delta_1, \delta_2)$, we prefer to adopt another strategy by means of a novel methodology of pull-back suggested in \cite{structurefunction} for studying constant solution of more structured dyadic models. Pull-back is a key dynamical idea often used in the literature to construct special solutions of differential equations, like periodic solutions. However, its utility is usually associated to the non-autonomy of certain systems: for example when coefficients are time-dependent and one is looking for infinite-time objects, the natural ways is to start from minus infinity. It is extremely interesting that this technique is successful not just in the Katz-Pavlovic framework, completely autonomous, but also in the generalized mixed dyadic model, as we will prove in last sections.
\\
\\
As already mentioned in (\ref{recursion}), we are interested in the following recursion

\begin{equation}\label{recursionbis}
    a_{n+1} = 2^{-\beta n} + \frac{a_{n-1}^2}{2^{\beta} a_n} > 0, \,\,\,\,\,\,\forall n \geq 1, \beta>0,
\end{equation}
where, without loss of generality, we set $a_0=0$ and $a_1>0$. Our main goal is to prove the existence of one and only one $a_1>0$ such that the self-similar solution that stems from $(a_0, a_1)$ following the rule (\ref{recursionbis}) has finite energy.

In order to lighten the notation, we prove Theorem \ref{teoselfsimilar} in the case $\beta = 1$. The general case will be a straightforward consequence.
\\
\\
First of all we start by proving the following proposition. 
\begin{prop}\label{teofacile}
Consider the recursion
\begin{equation}\label{epsilonsegniqualsiasi}
\begin{split}
    a_{n+1} &= \frac{a^2_{n-1}}{2a_n} + \epsilon_n, \,\,\, \epsilon_n = 2^{-n}, \,\,\, n\geq 1\\
    a_0 &= 0.
\end{split}    
\end{equation}
There is one and only one $u = \{ u_n\}_{n \in \mathbb{N}}$ positive finite energy sequence satisfying (\ref{epsilonsegniqualsiasi}). Moreover, such  $\{ u_n\}$ lies in $H^s$ for any $s < 1/3$. 
\end{prop}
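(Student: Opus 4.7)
The plan is to prove existence by a pull-back construction and uniqueness by a geometric-growth argument on pairs of solutions, both organized around the Kolmogorov scaling $a_n \asymp 2^{-n/3}$. It will be convenient to pass to the renormalized variables $b_n = 2^{n/3} a_n$, for which the recursion becomes $b_{n+1} b_n = b_{n-1}^2 + 2^{(1-2n)/3}\,b_n$; the inhomogeneous term decays geometrically and the limiting autonomous rule $b_{n+1} b_n = b_{n-1}^2$ fixes every constant sequence, signalling that a K41 profile is the natural target.

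\textbf{Pull-back.} Inverting the recursion yields the backward map $a_{n-1} = \sqrt{2\,a_n\,(a_{n+1} - 2^{-n})}$, which is real and positive as soon as $a_{n+1} > 2^{-n}$. For each $N \ge 2$ I would seed this backward map at indices $N, N+1$ with values lying on (or near) the ray $a_n = \alpha \cdot 2^{-n/3}$ and iterate down to $n = 0$; the resulting finite sequence $\{a_n^{(N)}\}_{n=0}^{N+1}$ is positive by construction. The core quantitative step is to exhibit a corridor $c_1 \cdot 2^{-n/3} \le a_n^{(N)} \le c_2 \cdot 2^{-n/3}$ that is left invariant by the backward map, so the estimate holds uniformly in $N$. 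This uses that the correction $\epsilon_n = 2^{-n}$ is much smaller than the target scale $2^{-n/3}$ (the ratio is $2^{-2n/3}$), hence it is absorbed into the corridor at every step.

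\textbf{Passing to the limit.} The uniform corridor provides compactness: a diagonal extraction yields a subsequence along which $a_n^{(N)} \to u_n$ for every $n$. The limit satisfies the forward recursion \eqref{epsilonsegniqualsiasi} for $n \ge 1$, inherits the two-sided bound $u_n \asymp 2^{-n/3}$, and therefore has finite energy and belongs to $H^s$ for every $s < 1/3$ (while the lower bound prevents $H^{1/3}$). The boundary condition $u_0 = 0$ is enforced by observing that, after one forward step, $a_0 = 0$ is equivalent to $a_2 = 1/2$; this is a single scalar constraint on the two-parameter seed $(a_N^{(N)}, a_{N+1}^{(N)})$ and can be imposed for every $N$ by continuity within the one-parameter family of admissible seeds along the K41 ray, so that $u_0 = \lim_N a_0^{(N)} = 0$.

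\textbf{Uniqueness.} Suppose $\{u_n\}$ and $\{v_n\}$ were two positive finite-energy solutions with $u_1 \neq v_1$. Setting $d_n = u_n - v_n$ and subtracting the two recursions, a short computation shows that $d_n$ obeys a linear three-term recurrence whose coefficients, in the renormalized variables, tend to those of the autonomous limit $b_{n+1} b_n = b_{n-1}^2$; the nontrivial orbits of that limit recurrence grow geometrically. Hence the normalized difference $|d_n|/a_n$ eventually becomes of order one, forcing either a sign violation or a violation of the bound $u_n, v_n = O(2^{-n/3})$, contradicting positivity or finite energy. This forces $u_1 = v_1$, and then the forward recursion gives $u = v$.

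\textbf{Main obstacle.} The delicate ingredient is the quantitative pull-back estimate: constructing a backward-invariant corridor around the K41 ray that is also compatible with the scalar constraint $a_2^{(N)} = 1/2$. Once this is established, both the existence of the pull-back limit and the geometric propagation used in the uniqueness argument become essentially immediate consequences; moreover, the very same corridor delivers the Kolmogorov asymptotic $a_n/2^{-n/3} \to C$ as a free by-product, which will matter for the extensions in Sections \ref{dimostrazione2} and \ref{dimostrazione3}.
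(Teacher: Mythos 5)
Your existence argument is, in substance, the paper's: the $2^{n/3}$ renormalization, the backward (pull-back) iteration from a seed at level $N$, a corridor bound uniform in $N$, diagonal extraction, and a continuity/shooting argument to enforce $a_0=0$. Note, though, that your shooting target $a_2^{(N)}=1/2$ is attainable only as a boundary value: whenever the backward step $a_0=\sqrt{2a_1(a_2-\epsilon_1)}$ is real one has $a_2^{(N)}\ge\epsilon_1=1/2$, so imposing equality means taking the infimal admissible seed. That is exactly the paper's construction ($L^*=\sup_{N}I^{(N)}$ with $I^{(N)}$ the infimum of well-defined starting values), and the ``by continuity'' step you assert is precisely the delicate point the paper spends Proposition \ref{proptecnica} on. So this half is correct in outline but hides the one nontrivial argument.

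The uniqueness step has a genuine gap. Writing $d_n=u_n-v_n$ and $e_n=2^{n/3}d_n$, the linearized recurrence in the autonomous limit is $e_{n+1}=2e_{n-1}-e_n$, with characteristic roots $-2$ and $1$. It is therefore false that ``the nontrivial orbits of that limit recurrence grow geometrically'': the $\lambda=1$ direction gives $d_n\sim B\,2^{-n/3}$, a perturbation that keeps finite energy (it amounts to changing the Kolmogorov constant $C$), and this neutral mode is exactly the hypothetical second solution you need to exclude. To salvage the argument you would have to show that the data $d_0=0$, $d_1\neq 0$ necessarily excites the $(-2)^n$ mode and that the non-autonomous corrections to the coefficients cannot cancel it (a Poincar\'e--Perron type statement); neither is addressed. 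A second problem is circularity: the coefficients of the difference recurrence involve $v_n$, and for a second finite-energy solution you only know $v_n\to 0$, not $v_n=O(2^{-n/3})$, so ``the coefficients tend to those of the autonomous limit'' is not available a priori. The paper sidesteps both issues by arguing on the nonlinear recursion directly, proving by induction the alternating separation $v_n\ge u_n 2^{\alpha n}$ for $n$ odd and $v_n\le u_n 2^{-\alpha n}$ for $n$ even (or with parities swapped); this is the rigorous counterpart of the $(-2)^n$ mode and requires no a priori asymptotics for $v$.
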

\begin{proof}
We structure our proof in three different steps. At first, we prove the existence of solution for recursion (\ref{epsilonsegniqualsiasi}), then we show regularity of such solution, finally we prove uniqueness among positive solutions with finite energy.
\\
\\
\textbf{Step (1)}: \textbf{Existence}.
\\
\\
We start by considering the following two definition.

\begin{defn}
We call \textbf{strong self-similar} any positive sequence $\{ a_n \}_{n \in \mathbb{N}}$ satisfying the recurrence:
\begin{equation}\label{strongselfsimilar}
\begin{split}
    a_{n+1} &= \frac{a_{n-1}^2}{2a_n} + \epsilon_n, \,\,\, \epsilon_n = 2^{-n}, \,\,\, n\geq 1\\
    a_0 &= 0.
\end{split}
\end{equation}
\end{defn}

\begin{defn}
We call \textbf{weak self-similar} any positive sequence $\{ \tilde{a}_n\}_{n \in \mathbb{N}}$ satisfying the recurrence:
\begin{equation}\label{weakselfsimilar}
\begin{split}
    \tilde{a}_{n+1} &= \frac{\tilde{a}_{n-1}^2}{\tilde{a}_n} + \zeta_{n}, \,\,\, \zeta_n = \epsilon_n \cdot 2^{\frac{n-2}{3}}, \,\,\, n\geq 1\\
    \tilde{a}_0 &= 0.
\end{split}    
\end{equation}
\end{defn}

\begin{rem}
It is easy to verify that if $\{ \tilde{a}_n\}_{n \in \mathbb{N}}$ is a weak self-similar sequence then $\{ a_n = \frac{\tilde{a}_n}{2^{n/3}}\}_{n \in \mathbb{N}}$ is a strong self-similar sequence.
Conversely, for any strong self-similar sequence it is possible to recover the corresponding weak sequence from the equality above.
\end{rem}

In order to prove existence of strong self-similar sequence we use a pull back technique: we first consider recursion (\ref{weakselfsimilar}) backwards fixing a large $N>2$ and two appropriate starting values $\tilde{a}_{N+1}$ and $\tilde{a}_N$, then compute $\tilde{a}_n$ for lower coefficients $n<N$; finally we let $N \rightarrow \infty$ proving convergence by compactness to a weak self-similar solution and finally recovering a strong self-similar sequence from the remark above.
\\
\\
Thus, for any fixed $N>2$ we are interested in the following truncated reversed recursion:
\begin{equation}\label{backwardself}
\begin{split}
    &(\tilde{a}_{n-1}^{(N)})^2 = \tilde{a}_n^{(N)} (\tilde{a}_{n+1}^{(N)} - \zeta_{n}), \,\,\, n \leq N, \,\,\,\ \zeta_{n} = \epsilon_n \cdot 2^{\frac{n-2}{3}}\\
    &\tilde{a}_{N+1}^{(N)} = \tilde{a}_{N}^{(N)} = L > 0,\\
    &\tilde{a}_n^{(N)} = 0, \,\,\, n > N+1,
\end{split}
\end{equation}
where the initial value L will be chosen later accordingly to our requirements.
\\
\\
By taking a close look at recursion (\ref{backwardself}) we observe that the right-hand side must be non negative for every $n \leq N$. Thus, we first have to assess under which conditions recursion (\ref{backwardself}) is well-defined. The following proposition poses sufficient conditions for existence of weak self-similar solution.

\begin{prop}\label{existence}
Consider the system of equations (\ref{backwardself}) and let $M >0$ be such that
$$
M = \sum_{i=1}^{\infty} \zeta_i < \infty.
$$
Then any initial value $L \geq M$ gives rise to a well-defined sequence $\{ \tilde{a}_n^{(N)}\}$ for every $N>2$.
\end{prop}
\begin{proof}

We start by proving that every $\{ \tilde{a}_n^{(N)}\}$ that satisfies (\ref{backwardself}), when well-defined, is weakly increasing. We proceed by induction on $n$.

First two base cases are easy to verify:
$$
\tilde{a}_{N+1}^{(N)} = L = \tilde{a}_{N}^{(N)},
$$
$$
\tilde{a}_{N-1}^{(N)} = \sqrt{\tilde{a}_N^{(N)} (\tilde{a}_{N+1}^{(N)} - \zeta_{n-1})} = \sqrt{L (L - \zeta_{n-1})} < L = \tilde{a}_{N}^{(N)}.
$$
For the inductive step, we consider by hypothesis
\begin{equation*}
    \tilde{a}_{n+2}^{(N)} \geq \tilde{a}_{n+1}^{(N)} \,\,\, (i), \,\,\, \tilde{a}_{n+3}^{(N)} \geq \tilde{a}_{n+2}^{(N)} \implies \tilde{a}_{n+3}^{(N)} - \zeta_{n+1} \geq \tilde{a}_{n+2}^{(N)} - \zeta_{n} \,\,\,(ii)
\end{equation*}
and multiplying together inequalities $(i)$ and $(ii)$ we get:
$$
(\tilde{a}_{n+1}^{(N)})^2 = \tilde{a}_{n+2}^{(N)} (\tilde{a}_{n+3}^{(N)} - \zeta_{n+1}) \geq \tilde{a}_{n+1}^{(N)} (\tilde{a}_{n+2}^{(N)} - \zeta_{n}) = (\tilde{a}_{n}^{(N)})^2,
$$
proving the claim $\tilde{a}_{n+1}^{(N)} \geq \tilde{a}_{n}^{(N)}$.
\\
\\
It is also immediate to verify by induction that $\{ \tilde{a}_n^{(N)}\} \leq L$ for every $n \leq N$.
\\
\\
Let us consider again the increasing property in the following form:
$$
(\tilde{a}_{n+1}^{(N)})^2  \geq (\tilde{a}_{n}^{(N)})^2 = \tilde{a}_{n+1}^{(N)} (\tilde{a}_{n+2}^{(N)} - \zeta_n),
$$
and dividing both sides for the positive term $\tilde{a}_{n+1}^{(N)}$ we finally get
$$
\tilde{a}_{n+2}^{(N)} - \tilde{a}_{n+1}^{(N)} \leq \zeta_n.
$$
Applying a recursive argument to the inequality above it is possible to show
$$
\tilde{a}_{N}^{(N)} - \tilde{a}_{1}^{(N)} \leq \sum_{i=1}^{N-1} \zeta_i \leq \sum_{i=1}^{\infty} \zeta_i = M.
$$
We finally deduce that any initial value L satisfying
$$
0 \leq L - M \leq \tilde{a}_{n}^{(N)} \leq L
$$
gives rise to a well-defined truncated sequence. In particular, it is sufficient that $L \geq M$, concluding the proof.
\end{proof}

Proposition \ref{existence} tells that for every $N>2$, $\{\tilde{a}_{n}^{(N)}\}_n$ lies in the compact set $[L-M, L]$, thus by compactness and a diagonal extraction argument we can choose a subsequence $(N_i)_i \in \mathbb{N}$ such that $\tilde{a}_{n}^{(N_i)}$ converges for all $n \in \mathbb{N}$ to some number $\tilde{a}_{n}$. The sequence $\tilde{a} = \{ \tilde{a}_n \}_n$ satisfies recursion (\ref{weakselfsimilar}) by construction. In order to say that $\{ \tilde{a}_n \}_n$ is a weak self-similar sequence it is left to prove that $\tilde{a}_0=0$. The next Proposition shows how to choose the starting value $L$ in order to achieve this goal.

\begin{prop}\label{proptecnica}
Let us consider recursion (\ref{backwardself}) and a subsequence $(N_i)_i \in \mathbb{N}$ such that $\tilde{a}_{n}^{(N_i)}$ converges to $\tilde{a}_{n}$. There exists a starting value $L>0$ such that $\tilde{a}_{n}$ is weak self-similar, in particular
\begin{equation}
    \tilde{a}_0 = \lim_{N_i \rightarrow \infty} \tilde{a}_0^{(N_i)} = 0
\end{equation}
\end{prop}

\begin{proof}
Proposition \ref{existence} proves that, for every $N_i>2$, recursion $\tilde{a}_n^{(N)}$ is monotonously increasing. Thus, the idea is to the set the starting value $L$ as low as possible to produce a suitable recursion that proves the claim. With this regard, up to extracting one more subsequence of $(N_i)$, for every fixed $N_i>2$ consider the following
\begin{equation*}
I^{(N_i)} = \inf \{ L>0 \,\, | \,\, \text{the recursion that starts at} \,\, \tilde{a}_{N_i}^{(N_i)}=\tilde{a}_{N_i-1}^{(N_i)}=L \,\, \text{is well-defined}\}
\end{equation*}
Thanks to Proposition \ref{existence}, $I^{(N_i)}$ exists and $0 < I^{(N_i)}\leq M$. Moreover, the sequence $I^{N_i}$ is non-decreasing in $N_i$. Indeed, if $N_i<N_j$ then the sequences $\tilde{a}_{n}^{(N_i)}$, $\tilde{a}_{n}^{(N_j)}$ built from the same starting value satisfy $\tilde{a}_{n}^{(N_i)} \geq \tilde{a}_{n}^{(N_j)}$
thanks to the monotonic property. From the latter follows that the minimum starting value from which stems a well-defined $(N_i)$-truncated recursion is less or equal than the equivalent for an $(N_j)$-truncated recursion.
\\
We then claim that from the following starting value
\begin{equation*}
    L^{*} = \sup_{N_i>2} I^{(N_i)}
\end{equation*}
stems a limit sequence $\tilde{a}_n$ that is weak self-similar.
\\
First, we observe that $L^*$ is finite since $I^{(N_i)}\leq M$ for every $N_i>2$. The sequence $\tilde{a}_n$ satisfies recursion (\ref{backwardself}), hence the only property left to prove is
\begin{equation*}
    \tilde{a}_0 = \lim_{N_i \rightarrow \infty} \tilde{a}_0^{(N_i)} = 0.
\end{equation*}
On the contrary, suppose $\tilde{a_0}> \epsilon > 0$ for some $\epsilon>0$ and consider $f^{(N_i)}:\mathbb{R}_{>0}\rightarrow \mathbb{R}_{\geq 0}$ defined by
$$
f^{(N_i)}(L) = \tilde{a}_0^{(N_i)}.
$$
The function $f$ maps a positive number $L$ into the first element of the truncated $(N_i)$ recursion with starting values identical to $L$. By hypothesis, there is an index $N^*$ such that $f^{(N_i)}(L^*)>\epsilon>0$ for every $N_i>N^*$. Hence, by definition of $L^*$ and a continuity argument, there is a small enough value $\zeta>0$ and an index $N^{\zeta}>N^*$ such that $f^{(N_i)}(L^*-\zeta)>\epsilon>0$ for every $N_i>N^{\zeta}$. Since $I^{N_i}$ is monotonous, the latter would imply that
$$
L^* = \sup_{N_i>2} I^{N_i} \leq L^* - \zeta, 
$$
that is an absurd.

\end{proof}

Proposition \ref{proptecnica} proves that $\{ \tilde{a}_n \}_n$ is a weak self-similar sequence, and $a = \{a_n = \frac{\tilde{a}_n}{2^{n/3}}\}_n$ is the corresponding strong self-similar sequence.\\
\\
Furthermore, we observe that the condition
$$
M = \sum_{i=1}^{\infty} \zeta_i < \infty
$$
is equivalent to
$$
\sum_{n=1}^{\infty} \epsilon_n \cdot 2^{n/3}  = \sum_{n=1}^{\infty} 2^{-2n/3} < \infty,
$$
as required by Proposition \ref{teofacile}.
\\
\\
\textbf{Step (2)}: \textbf{Regularity}.
\\
\\
We are now ready to prove that any strong self-similar sequence $a = \{ a_n\}_{n \in \mathbb{N}}$ has finite energy, i.e.
$$
\sum_{n=1}^{\infty} a_n^2 < \infty.
$$
Moreover, such  $\{ a_n\}$ lies in $H^s$ for any $s < 1/3$.

\begin{prop}\label{L}
For every $s < 1/3$ any well-defined strong self-similar sequence has finite energy and lies in $H^s$.
\end{prop}
\begin{proof}
In Step (1) we have already shown that any weak self-similar sequence built from $L$ satisfies
$$
0 \leq L - M \leq \tilde{a}_{n} \leq L.
$$
By recovering the correct expression for the related strong self-similar sequence, we derive
$$
\sum_{n=1}^{\infty} 2^{2sn} \cdot a^2_n = \sum_{n=1}^{\infty} 2^{2sn} \cdot \frac{\tilde{a}_n}{2^{2n/3}} \leq L \cdot \sum_{n=1}^{\infty} 2^{n(2s-2/3)}.
$$
Finally, from the latter equation it follows that any strong self-similar sequence built from $L$ lies in $H^s$ for every $s < 1/3$.
\end{proof}

\textbf{Step (3)}: \textbf{Uniqueness}.
\\
\\
We now prove uniqueness among strong self-similar sequence with finite energy.

\begin{prop}\label{unicita}
Let $\{ b_n \}_n$ be a solution of recursion (\ref{recursion}) different from $\{ a_n \}_n$. Then exists $\alpha > 0$ such that for every $n \geq 3$:
\begin{equation*}
    \begin{split}
        &b_n \geq a_n \cdot 2^{\alpha n},\,\,\,\,\, \,\,\,\, \,n \,\,\,odd \\
        &b_n \leq a_n \cdot 2^{-\alpha n},\,\,\,\,\,  \,n \,\,\,even
    \end{split}
\end{equation*}
or
\begin{equation*}
\begin{split}
        &b_n \geq a_n \cdot 2^{\alpha n},\,\,\,\,\,\,\,\,\,  \,n \,\,\,even \\
        &b_n \leq a_n \cdot 2^{-\alpha n},\,\,\,\,\,  \,n \,\,\,odd
    \end{split}
\end{equation*}
\end{prop}
\begin{proof}
We prove only the first case of the proposition, the second being similar. We start by considering odd values of $n$ and finally even values.
\\
\\
\textbf{Case (1)}: $n$ odd.
By induction over $n$. By hypothesis $\{ b_n \}_n$ is different from $\{ a_n \}_n$, so without loss of generality we can suppose $b_3 > a_3$ and the existence of a real number $\alpha_1>0$ so that
$$
b_3 \geq a_3 \cdot 2^{3 \alpha_1},
$$
moreover, by Definition \ref{teofacile}
$$
b_4 = \frac{b_2^2}{2b_3} + \epsilon_3 < \frac{a_2^2}{2a^3} + \epsilon_3 = a_4,
$$
thus there exists also a real number $\alpha_2 >0$ so that
$$
b_4 \leq a_4 \cdot 2^{-4 \alpha_2},
$$
finally, by setting $\alpha = \min\{\alpha_1, \alpha_2\}$ we have proved base cases of induction.
\\
If $n$ is an odd number, then by hypothesis we have
\begin{equation*}
    b_{n+1} = \frac{b^2_{n-1}}{2b_n} + \epsilon_n \geq \frac{2^{2\alpha (n-1)}a^2_{n-1}}{2^{-\alpha n}(2a_n)} + \epsilon_n.
\end{equation*}
In what follows we will show that
\begin{equation*}
    \frac{2^{2\alpha (n-1)}a^2_{n-1}}{2^{-\alpha n}(2a_n)} + \epsilon_n \geq 2^{\alpha(n+1)}(\frac{a^2_{n-1}}{2a_n} + \epsilon_n) = 2^{\alpha(n+1)}a_{n+1},
\end{equation*}
concluding the proof.
\\
Let us first rewrite the latter inequality in the more compact form
\begin{equation}\label{compactform}
    (a_{n+1} - \epsilon_n) \cdot (2^{3\alpha n - 2\alpha} - 2^{\alpha n + \alpha}) \geq \epsilon_n \cdot (2^{\alpha n + \alpha} - 1).
\end{equation}
We structure the proof in two different steps.
\\
\\
\textbf{Step (1)}: $a_{n+1} \geq \epsilon_{n-1}$.
\\
\\
Let's rewrite the claim in terms of the corresponding $\tilde{a}_n$ weak sequence:
$$a_{n+1} \geq \epsilon_{n-1} \Longleftrightarrow \frac{\tilde{a}_{n+1}}{2^{(n+1)/3}} \geq \epsilon_{n-1} \Longleftrightarrow \tilde{a}_{n+1} \geq 2^{\frac{-(2n-4)}{3}}.
$$
In the latter inequality the left side in an increasing function of $n$ while the right side is decreasing, thus it is enough to verify the inequality for the smallest meaningful value of odd $n$, i.e. $n=5$:
$$
\tilde{a}_{6} \geq 2^{-2}.
$$
In what follows we are going to prove the stronger relation:
$$
\tilde{a}_{4} \geq 2^{-2}.
$$
The latter is equivalent to $a_{4} \geq 2^{-\frac{10}{3}}$ for the related strong self-similar sequence. We recall that without loss of generality we have chosen $a_0=0$ and $a_1 \in \mathbb{R}$ and, by a direct calculation, thanks to (\ref{strongselfsimilar}) we derive
$$
a_4 = \frac{1}{8} \cdot \frac{a_1^2 + \frac{3}{2}}{a_1^2 + \frac{1}{2}}.
$$
Finally
$$
a_4 \geq 2^{-\frac{10}{3}} \iff \frac{1}{8} \cdot \frac{a_1^2 + \frac{3}{2}}{a_1^2 + \frac{1}{2}} \geq 2^{-\frac{10}{3}} \iff \frac{a_1^2 + \frac{3}{2}}{a_1^2 + \frac{1}{2}} \geq 2^{-\frac{1}{3}},
$$
with the last inequality been true for every choice $a_1 \in \mathbb{R}$.
\\
\\
\textbf{Step (2)}: $ (2^{3\alpha n - 2\alpha} - 2^{\alpha n + \alpha}) \geq (2^{\alpha n + \alpha} - 1)$.
\\
\\
First we rewrite above inequality in the form
$$
2^{3\alpha n - 2\alpha} +1 \geq 2^{\alpha n + \alpha + 1},
$$
then observing that both sides are increasing function of $n$ and left sides grows faster than right sides, it is again enough to prove the claim for the smallest meaningful value $n=5$.
\\
By letting $n=5$ we obtain:
\begin{equation*}
\begin{split}
2^{13\alpha} - 2^{6 \alpha +1} + 1 = (2^\alpha - 1)\cdot [&(2^{12 \alpha} - 2^{5\alpha}) + (2^{11 \alpha} - 2^{4 \alpha}) + (2^{10 \alpha} - 2^{3 \alpha}) + \\ &(2^{9 \alpha} - 2^{2 \alpha}) + (2^{8 \alpha} - 2^{\alpha}) + (2^{7 \alpha} - 1) + 2^{6 \alpha}] > 0
\end{split}
\end{equation*}
because it is a product of positive numbers.
By multiplying together inequalities in Step (1) and Step (2) one can derive (\ref{compactform}).
\\
\\
\textbf{Case (2)}: $n$ even.
By induction over $n$. In the previous case we have already shown that exists a real number $\alpha > 0$ so that
$$
b_3 \geq a_3 \cdot 2^{3 \alpha}, \,\,\, b_4 \leq a^4 \cdot 2^{-4 \alpha}.
$$

If $n$ is an even number, then by hypothesis we have
\begin{equation*}
    b_{n+1} = \frac{b_{n-1}^2}{2b_n} + \epsilon_n \leq \frac{2^{-2\alpha(n+1)} \cdot a_{n-1}^2}{2^{\alpha n}a_n} + \epsilon_n.
\end{equation*}
We will now show that
$$
\frac{2^{-2\alpha(n+1)} \cdot a_{n-1}^2}{2^{\alpha n} \cdot a_n} + \epsilon_n \leq 2^{-\alpha(n+1)} (\frac{a_{n-1}^2}{2a_n} + \epsilon_n) = 2^{-\alpha(n+1)}a_{n+1}.
$$
concluding the proof.

First, we rewrite inequality above as follows:
\begin{equation}\label{tesi}
(a_{n+1} - \epsilon_n) (2^{-3\alpha n - 2\alpha} - 2^{-\alpha n - \alpha}) \leq \epsilon_n (2^{-\alpha n - \alpha} - 1).
\end{equation}

In the previous case we have already shown that $a_{n+1} \geq \epsilon_{n-1}$, thus as a fortiori argument we have the following:
\begin{equation*}
    2^n a_{n+1} \geq \epsilon_{n-1}.
\end{equation*}
Moreover, observing that for every $\alpha > 0$
$$
(2^{-3\alpha n - 2\alpha} - 2^{-\alpha n - \alpha}) < 0, \,\,\,\,(2^{-\alpha n - \alpha} - 1) < 0,
$$
in order to prove (\ref{tesi}) it is enough to show that
$$
2^{-n}(2^{-3\alpha n - 2\alpha} - 2^{-\alpha n - \alpha}) \geq  (2^{-\alpha n - \alpha} - 1),
$$
or equivalently
$$
2^{3\alpha n + 2\alpha + n} +1 \geq  2^{2\alpha n + \alpha}(2^n +1).
$$
Both sides are increasing functions of $n$ and left side increases faster than right side, so it is enough to prove the claim for the smallest admissible even $n$, i.e. $n=6$. Namely, we need to prove
$$
2^{20\alpha+6} +1 \geq  2^{13\alpha+6} + 2^{13\alpha}.
$$
Let's consider the function
$$f(\alpha) = 2^{20\alpha+6} +1 -  2^{13\alpha+6} - 2^{13\alpha}.$$
It is easy to notice that $f(0) = 0$ and $f(x)$ has positive derivative on the positive x-asis, namely
$$
\frac{df}{dx} = 5\cdot 2^{13} \cdot \log_2 (2^{7x + 8} - 169) > 0, \,\,\,\, x\geq 0,
$$
this proves the claim.
\end{proof}
Proposition \ref{unicita} tells us that every solution $b_n$ different from $a_n$ cannot have finite energy. Moreover, any other solution except $a_n$ cannot lie in any space $H^s$ even for negative values of $s$.
\end{proof}

We now observe that Theorem \ref{teoselfsimilar} is an immediate consequence of Proposition \ref{teofacile}.

\section{Proof of Theorem \ref{stationary}}\label{dimostrazione2}

This section is entirely devoted to proving Theorem \ref{stationary}.
\\
\\
Let us start by considering the following recursive sequence:
\begin{equation}\label{ricorrenzaavanti}
\begin{split}
&b_0 = C > 0, \\
&b_{n+1} = \frac{-\delta_1 k_1^{4/3} + \sqrt{\delta_1^2 k_1^{8/3} + 4\delta_1  \delta_2 k_1^{4/3} b_n^{-2} + 4\delta_2^2 b_n^{-1}}}{2\delta_2}
\end{split}
\end{equation}
for some positive starting value $C>0$ and some positive coefficient $\delta_1, \delta_2$ such that $\delta_1 / \delta_2 < k_1^{-4/3}$.
\\
Lemma \ref{lemmaavanti} tells useful information about the sequence $\{ b_n \}_n$ and its asymptotic behaviour.

\begin{lem}\label{lemmaavanti}
For every starting value $C>0$, and positive coefficient $\delta_1, \delta_2$ such that $\delta_1 / \delta_2 < k_1^{-4/3}$, the recursive sequence (\ref{ricorrenzaavanti}) satisfies
$$
\lim_{n \rightarrow \infty} b_n = 1.
$$
\end{lem}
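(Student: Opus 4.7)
The plan is to turn the explicit recursion (\ref{ricorrenzaavanti}) into an implicit identity that is easier to analyze. Since the square root in (\ref{ricorrenzaavanti}) strictly exceeds $\delta_1 k_1^{4/3}$ (the radicand is $\delta_1^2 k_1^{8/3}$ plus a positive quantity), the quantity $2\delta_2 b_{n+1} + \delta_1 k_1^{4/3}$ is positive and squaring is reversible. Setting $\rho := \delta_1 k_1^{4/3}/\delta_2 \in (0,1)$, a routine simplification shows that (\ref{ricorrenzaavanti}) is equivalent to
\[
b_{n+1}^2 + \rho\, b_{n+1} \;=\; \rho\, b_n^{-2} + b_n^{-1},
\]
i.e.\ $f(b_{n+1}) = g(b_n)$, where $f(x) = x^2 + \rho x$ is strictly increasing on $(0,\infty)$ and $g(x) = \rho x^{-2} + x^{-1}$ is strictly decreasing on $(0,\infty)$. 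Consequently $b_{n+1} = T(b_n)$ for the continuous, strictly decreasing bijection $T := f^{-1}\!\circ g$ of $(0,\infty)$. The candidate limit $b=1$ is the unique fixed point of $T$, because
\[
g(b) - f(b) \;=\; \frac{(1-b^3)(\rho+b)}{b^2}
\]
vanishes on $(0,\infty)$ only at $b=1$.

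The heart of the proof will be to exclude $2$-cycles, i.e.\ to show $b=1$ is the only fixed point of $T^2$ on $(0,\infty)$. Suppose $T(p)=q$ and $T(q)=p$, so $p(p+\rho) = (q+\rho)/q^2$ and $q(q+\rho) = (p+\rho)/p^2$. Dividing these two identities and cross-multiplying yields $p(q+\rho)^2 = q(p+\rho)^2$, which factors as
\[
(p-q)(pq - \rho^2) \;=\; 0.
\]
Either $p=q$, forcing $p=q=1$, or $pq=\rho^2$; in the latter case substituting $q=\rho^2/p$ back into $q(q+\rho) = (p+\rho)/p^2$ collapses to $(\rho+p)(\rho^3-1)=0$, impossible for $p,\rho>0$ with $\rho<1$. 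Hence $T^2$ has the unique fixed point $b=1$ on $(0,\infty)$. This algebraic reduction — the factorization that isolates precisely the threshold $\rho < 1$ — is the main obstacle; everything else is soft.

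The remainder is a standard monotone-orbit argument. Because $T$ is strictly decreasing, $T^2$ is strictly increasing and continuous on $(0,\infty)$ with $1$ as its only fixed point, so $T^2(b)-b$ has constant sign on each of $(0,1)$ and $(1,\infty)$. The sign is determined locally at $b=1$ by the linearisation $|T'(1)| = (2\rho+1)/(\rho+2) < 1$, which gives $T^2(b)>b$ on $(0,1)$ and $T^2(b)<b$ on $(1,\infty)$. Moreover, $T^2(1)=1$ together with strict monotonicity forces $T^2\bigl((0,1)\bigr)\subset(0,1)$ and $T^2\bigl((1,\infty)\bigr)\subset(1,\infty)$. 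Therefore the even subsequence $(b_{2k})$ is monotone and trapped between its starting value and $1$, hence convergent, and its limit must be a fixed point of $T^2$, namely $1$. By continuity $b_{2k+1} = T(b_{2k}) \to T(1) = 1$, so $\lim_{n\to\infty} b_n = 1$, proving the lemma.
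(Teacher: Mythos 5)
Your proof is correct, and it takes a genuinely different (and cleaner) route than the paper's. The paper works directly with the explicit square-root formula: it proves by induction that the orbit oscillates around $1$, that the odd and even subsequences are monotone, and then identifies the common limit by a three-case analysis of the two-step fixed-point equation, all of which requires repeated squaring and comparison of radicands. You instead undo the square root once and for all via the implicit identity $f(b_{n+1})=g(b_n)$ with $f(x)=x^2+\rho x$, $g(x)=\rho x^{-2}+x^{-1}$, recognize the iteration as a strictly decreasing homeomorphism $T=f^{-1}\circ g$ of $(0,\infty)$, and reduce everything to excluding nontrivial $2$-cycles — which your factorization $(p-q)(pq-\rho^2)=0$ does in two lines, with the case $pq=\rho^2$ collapsing to $(\rho+p)(\rho^3-1)=0$. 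This isolates exactly where the hypothesis $\delta_1/\delta_2<k_1^{-4/3}$ (i.e.\ $\rho<1$) enters, and in fact reveals that $\rho=1$ is the degenerate threshold at which an entire family of $2$-cycles ($pq=1$) appears; the paper's case analysis contains the same content but in a less transparent form. The concluding monotone-orbit argument for $T^2$ (strictly increasing, unique fixed point $1$, sign of $T^2(b)-b$ fixed on each side of $1$ by $|T'(1)|=(2\rho+1)/(\rho+2)<1$) is standard and correctly executed; it recovers the paper's properties (1)–(3) as automatic consequences of the general theory of decreasing one-dimensional maps rather than by hand. Both proofs are valid; yours is shorter and makes the role of the parameter restriction more conceptual, while the paper's is more self-contained in that it never appeals to inverse functions or linearization.
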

\begin{proof}
Since recursion (\ref{ricorrenzaavanti}) admits $1$ as unique fixed point, we first observe that if $C=1$ then $b_n \equiv 1$ for every $n\geq 0$.
\\
\\
Without loss of generality let us suppose $C<1$ (the case $C>1$ being specular).
We will prove the following properties
\begin{enumerate}
    \item $b_{2n+1}>1, b_{2n} < 1$, $\forall n\geq 0$;
    \item $1< b_{2n+1}<b_{2n-1}$ and $0<b_{2n} < b_{2n+2} < 1$, $\forall n\geq 0$;
    \item $\lim_{n \rightarrow \infty} b_{2n+1} = \lim_{n \rightarrow \infty} b_{2n} = 1$,
\end{enumerate}
the statement will follow trivially.
\\
\\
We start observing that $b_1>1$:
\begin{equation*}
    \begin{split}
        b_1 &= \frac{-\delta_1 k_1^{4/3} + \sqrt{\delta_1^2 k_1^{8/3} + 4\delta_1 \delta_2 k_1^{4/3} C^{-2} + 4\delta_2^2 C^{-1}}}{2\delta_2}\\
        &> \frac{-\delta_1 k_1^{4/3} + \sqrt{\delta_1^2 k_1^{8/3} + 4\delta_1 \delta_2 k_1^{4/3} + 4\delta_2^2 }}{2\delta_2} = 1.
    \end{split}
\end{equation*}

Let's now suppose $b_{2n-1}>1$ for some $n>0$. Then we have
$$
b_{2n+1} = \frac{-\delta_1 k_1^{4/3} + \sqrt{\delta_1^2 k_1^{8/3} + 4\delta_1 \delta_2 k_1^{4/3}b_{2n}^{-2} + 4\delta_2^2 b_{2n}^{-1}}}{2\delta_2},
$$
moreover, by inductive hypothesis and definition
\begin{equation*}
\begin{split}
b_{2n} =& \frac{-\delta_1 k_1^{4/3} + \sqrt{\delta_1^2 k_1^{8/3} + 4\delta_1 \delta_2 k_1^{4/3}b_{2n-1}^{-2} + 4\delta_2^2 b_{2n-1}^{-1}}}{2\delta_2}\\
< &\frac{-\delta_1 k_1^{4/3} + \sqrt{\delta_1^2 k_1^{8/3} + 4\delta_1 \delta_2 k_1^{4/3} + 4\delta_2^2}}{2\delta_2} = 1,
\end{split}
\end{equation*}
hence $b_{2n}^{-1} > 1$, and finally 
\begin{equation*}
\begin{split}
b_{2n+1} =& \frac{-\delta_1 k_1^{4/3} + \sqrt{\delta_1^2 k_1^{8/3} + 4\delta_1 \delta_2 k_1^{4/3}b_{2n}^{-2} + 4\delta_2^2 b_{2n}^{-1}}}{2\delta_2}\\
&> \frac{-\delta_1 k_1^{4/3} + \sqrt{\delta_1^2 k_1^{8/3} + 4\delta_1 \delta_2 k_1^{4/3} + 4\delta_2^2}}{2\delta_2} = 1,
\end{split}
\end{equation*}
proving property (1).\\
We now focus on the first part of property (2) (the second being identical).
By definition we can write
\begin{equation*}
    \begin{split}
        b_{2n+1} < b_{2n-1} \iff &\frac{-\delta_1 k_1^{4/3} + \sqrt{\delta_1^2 k_1^{8/3} + 4\delta_1 \delta_2 k_1^{4/3}b_{2n}^{-2} + 4\delta_2^2 b_{2n}^{-1}}}{2\delta_2} < b_{2n-1}\\
        \iff & 4\delta_1 \delta_2 k_1^{4/3}b_{2n}^{-2} + 4\delta_2^2 b_{2n}^{-1} < 4b_{2n-1}^2\delta_2^2 + 4\delta_1 \delta_2 b_{2n-1} k_1^{4/3}\\
        \iff & \delta_1 k_1^{4/3}b_{2n}^{-2} + \delta_2 b_{2n}^{-1} < b_{2n-1}^2\delta_1 + \delta_2 b_{2n-1} k_1^{4/3}\\
        \iff & \delta_1 k_1^{4/3} (b_{2n}^{-2} - b_{2n-1}) < \delta_2 (b_{2n-1}^2 - b_{2n}^{-1}).
    \end{split}
\end{equation*}
By hypothesis, we set $\delta_1 k_1^{4/3} < \delta_2$, thus it is enough to require
$$
b_{2n}^{-2} - b_{2n-1} < b_{2n-1}^2 - b_{2n}^{-1} 
$$
within the positive condition on the right side $0 < b_{2n-1}^2 - b_{2n}^{-1}$.
We observe that the above two inequalities are both satisfied if $b_{2n}^{-1} < b_{2n-1}$, indeed:
$$
b_{2n}^{-1} < b_{2n-1} \implies b_{2n}^{-1} + b_{2n}^{-2}< b_{2n-1} + b_{2n-1}^2
$$
and 
$$
b_{2n}^{-1} < b_{2n-1} \implies b_{2n}^{-2} < b_{2n-1}^2 \implies 0 < b_{2n}^{-1} < b_{2n}^{-2} < b_{2n-1}^2,
$$
the latter being true due to $b_{2n} < 1$.
\\
\\
We are now left to prove the sufficient condition $b_{2n}^{-1} < b_{2n-1}$ or equivalently
\begin{equation*}
    \begin{split}
&b_{2n}^{-1} = \frac{2 \delta_2}{-\delta_1 k_1^{4/3} + \sqrt{\delta_1^2 k_1^{8/3} + 4\delta_1 \delta_2 k_1^{4/3}b_{2n-1}^{-2} + 4\delta_2^2 b_{2n-1}^{-1}}} < b_{2n-1}\\
& \iff \frac{2 \delta_2}{b_{2n-1}} + \delta_1 k_1^{4/3} < \sqrt{\delta_1^2 k_1^{8/3} + 4\delta_1 \delta_2 k_1^{4/3}b_{2n-1}^{-2} + 4\delta_2^2 b_{2n-1}^{-1}}\\
& \iff 4\delta_1^2 b_{2n-1}^{-2} + 4\delta_1 \delta_2 k_1^{4/3} b_{2n-1}^{-1} < 4\delta_1 \delta_2 k_1^{4/3}b_{2n-1}^{-2} + 4\delta_2^2 b_{2n-1}^{-1}\\
&\iff \delta_2 b_{2n-1}^{-2} + \delta_1 k_1^{4/3} b_{2n-1}^{-1} < \delta_1 k_1^{4/3}b_{2n-1}^{-2} + \delta_2 b_{2n-1}^{-1}\\
& \iff \delta_2 + \delta_1 k_1^{4/3} b_{2n-1} < \delta_1 k_1^{4/3} + \delta_2 b_{2n-1}\\
& \iff (\delta_2 - \delta_1 k_1^{4/3}) < (\delta_2 - \delta_1 k_1^{4/3})\cdot b_{2n-1},
    \end{split}
\end{equation*}
finally the latter inequality holds because $ \delta_2 - \delta_1 k_1^{4/3}>0$ and $b_{2n-1} > 1$.\\
We can now say that $b_{2n+1}$ admits limit $\lim_{n \rightarrow \infty} b_{2n+1} = L \geq 1$. Suppose $L > 1$, then

\begin{equation}\label{equazioneL}
L = \frac{-\delta_1 k_1^{4/3} + \sqrt{\delta_1^2 k_1^{8/3} + 4\delta_1 \delta_2 k_1^{4/3} g(L)^{-2} + 4\delta_2^2 g(L)^{-1}}}{2\delta_2}
\end{equation}
where 
$$
g(L) = \frac{-\delta_1 k_1^{4/3} + \sqrt{\delta_1^2 k_1^{8/3} + 4\delta_1 \delta_2 k_1^{4/3} L^{-2} + 4\delta_2^2 L^{-1}}}{2\delta_2}
$$
By a direct calculation equation (\ref{equazioneL}) holds if and only if
\begin{equation*}
    \begin{split}
        & 4\delta_2^2L^2 + 4\delta_1 \delta_2 k_1^{4/3}L= 4\delta_1 \delta_2 k_1^{4/3} g(L)^{-2} + 4\delta_2^2 g(L)^{-1}\\
        & \iff \delta_2 L^2 + \delta_1 k_1^{4/3}L= \delta_1 k_1^{4/3} g(L)^{-2} + \delta_2 g(L)^{-1}\\
        & \iff \delta_2 (L^2 - g(L)^{-1}) = \delta_1 k_1^{4/3} (g(L)^{-2} - L).
    \end{split}
\end{equation*}

Let's take a closer look at the last equation. By hypothesis $\delta_2 > \delta_1 k_1^{4/3}$, so just one of the following could hold:
\begin{itemize}
    \item $ L^2 - g(L)^{-1} > 0$ and $(L^2 - g(L)^{-1}) <  (g(L)^{-2} - L)$:\\
    \\
    from the second inequality we recover $(L^2 + L) <  (g(L)^{-2} + g(L)^{-1})$. Thanks to the properties we have already proved, it is not hard from the latter to deduce $ L < g(L)^{-1}$. It is now time to expand the right-hand side to obtain:
   \begin{equation*}
   \begin{split}
    &L < g(L)^{-1} = \frac{2\delta_2}{-\delta_1 k_1^{4/3} + \sqrt{\delta_1^2 k_1^{8/3} + 4\delta_1 \delta_2 k_1^{4/3} L^{-2} + 4\delta_2^2 L^{-1}}}\\
    & \iff 4\delta_1 \delta_2 k_1^{4/3} + 4\delta_2^2 L < 4\delta_2^2 + 4\delta_1 k_1^{4/2} L\\
    & \iff L (\delta_2 - \delta_1 k_1^{4/3}) < (\delta_2 - \delta_1 k_1^{4/3}) \iff L < 1
    \end{split}
    \end{equation*}
    that is absurd.
    \item $L^2 - g(L)^{-1} < 0$ and $(L^2 - g(L)^{-1}) >  (g(L)^{-2} - L)$:
    \\
    \\
    again, from our assumptions:
    \begin{equation*}
   \begin{split}
    & L^2 < g(L)^{-1} \iff L^2 < \frac{2\delta_2}{-\delta_1 k_1^{4/3} + \sqrt{\delta_1^2 k_1^{8/3} + 4\delta_1 \delta_2 k_1^{4/3} L^{-2} + 4\delta_2^2 L^{-1}}}\\
    & \iff \delta_1^2 k_1^{8/3} + 4\delta_1 \delta_2 k_1^{4/3} L^{-2} + 4\delta_2^2 L^{-1} < (2\delta_2 / L^2 + \delta_1 k_1^{4/3})^2\\
    & \iff 4\delta_1 \delta_2 k_1^{4/3} L^{-2} + 4\delta_2^2 L^{-1} < 4\delta_2^2 L^{-4} + 4\delta_1 \delta_2 k_1^{4/3}L^{-2}\\
    & \iff L^{3} < 1 \iff L < 1\\
    \end{split}
    \end{equation*}
    again against our assumption.
    \item $L^2 = g(L)^{-1}$ and $L = g(L)^{-2}$:
    \\
    \\
    if $x, y $ are two positive real numbers such that
    $$
    x^2 = y, \,\,\,\, x= y^2,
    $$
    the only solution is $x=y=1$.
\end{itemize}
 We conclude that $L = \lim_{n \rightarrow \infty} {b_{2n+1}} =1$. With same argument one can show $ \lim_{n \rightarrow \infty} {b_{2n}} = 1$, concluding the proof.
\end{proof}

We now state and prove an equivalent for Lemma \ref{lemmaavanti} when $\delta_1 / \delta_2 > k_1^{-4/3}$.\\
First, for every $N > 1$ consider the following recursive backward sequence $\{ (b^*_n)^{(N)} \}_n$:
\begin{equation}\label{ricorrenzaindietro}
\begin{split}
&(b^*_N)^{(N)} = C^* > 0, \\
&(b^*_{n})^{(N)} = \frac{-\delta_2 k_1^{-4/3} + \sqrt{\delta_2^2 k_1^{-8/3} + 4\delta_1 \delta_2 k_1^{-4/3} ((b^*_{n+1})^{(N)})^{-2} + 4\delta_1^2 ((b^*_{n+1})^{(N)})^{-1}}}{2\delta_1}
\end{split}
\end{equation}
for any $0 \leq n < N$, some positive starting value $C^*>0$ and some positive coefficient $\delta_1, \delta_2$ such that $\delta_1 / \delta_2 > k_1^{-4/3}$.
\\
Lemmas \ref{lemmaindietro} and \ref{lemmaindietrobis} tell useful information about sequence $\{ (b^*_n)^{(N)} \}_n$ and its asymptotic behaviour.

\begin{lem}\label{lemmaindietro}
For every starting value $C^*>0$, any $N>1$ and any positive coefficients $\delta_1$, $\delta_2$ such that $\delta_1 / \delta_2 > k_1^{-4/3}$, the recursive sequence $\{ (b_n^{*})^{(N)} \}_n$ defined above satisfies
\begin{enumerate}
\item $0 < (b_N^{*})^{(N)} < (b_n^{*})^{(N)} < (b_{N-1}^{*})^{(N)} < \frac{1}{C^{*}}$, if $C^* <1$, $\forall n\geq 1$.
\item $0 < (b_{N-1}^{*})^{(N)} < (b_n^{*})^{(N)} < (b_{N}^{*})^{(N)} = C^*$, if $C^* >1$, $\forall n\geq 1$.
\item $(b_n^{*})^{(N)} \equiv 1$, if $C^* =1$, $\forall n\geq 1$.
\end{enumerate}
\end{lem}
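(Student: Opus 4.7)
The plan is to recast the backward recursion (\ref{ricorrenzaindietro}) as the iteration of a single map and analyse the orbits of its square. Define
$$
F(x) \;=\; \frac{-\delta_2 k_1^{-4/3} + \sqrt{\delta_2^2 k_1^{-8/3} + 4\delta_1 \delta_2 k_1^{-4/3} x^{-2} + 4\delta_1^2 x^{-1}}}{2\delta_1},
$$
so that $(b_n^{*})^{(N)} = F\bigl((b_{n+1}^{*})^{(N)}\bigr)$ for $0 \leq n < N$, with starting value $(b_N^{*})^{(N)} = C^*$. A direct substitution gives $F(1)=1$; the radicand is strictly decreasing in $x$, hence $F$ is strictly decreasing on $(0,\infty)$ and swaps the intervals $(0,1)$ and $(1,\infty)$. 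Consequently $F^2$ is strictly increasing and maps each of these two intervals into itself.

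The crux of the proof is to show that $F^2$ has $1$ as its only positive fixed point, equivalently that $F$ has no non-trivial $2$-cycle. Using the characterization
$$
F(x) = y \iff \delta_1 y^2 + \delta_2 k_1^{-4/3} y = \delta_2 k_1^{-4/3} x^{-2} + \delta_1 x^{-1},
$$
I would assume $F(a)=b$ and $F(b)=a$ with $a \neq b$. Subtracting the two instances of the identity and dividing by $b-a$, then adding them, produces, with $s=a+b$ and $p=ab$, the pair
$$
s(\delta_1 p^2 - \delta_2 k_1^{-4/3}) = p(\delta_1 - \delta_2 k_1^{-4/3} p), \qquad (s^2-2p)(\delta_1 p^2 - \delta_2 k_1^{-4/3}) = sp(\delta_1 - \delta_2 k_1^{-4/3} p).
$$
Substituting the first into the second forces $\delta_1 p^2 = \delta_2 k_1^{-4/3}$, and then the vanishing left-hand side of the first equation also forces $\delta_1 = \delta_2 k_1^{-4/3}\,p$; these combine to give $p^3=1$, hence $p=1$, and so $\delta_1/\delta_2 = k_1^{-4/3}$, contradicting the hypothesis $\delta_1/\delta_2 > k_1^{-4/3}$. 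To fix the orientation of $F^2$ on each invariant interval, a short computation yields $|F'(1)| = (2\delta_2 k_1^{-4/3} + \delta_1)/(\delta_2 k_1^{-4/3} + 2\delta_1)$, which is strictly less than $1$ exactly because $\delta_2 k_1^{-4/3} < \delta_1$. Combined with the absence of $2$-cycles and continuity, this yields $F^2(x) > x$ on $(0,1)$ and $F^2(x) < x$ on $(1,\infty)$.

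Case (3) is then immediate from $F(1)=1$. In Case (1), $C^* < 1$ gives $F(C^*) > 1$, and I split the backward orbit by the parity of $N-n$ into two subsequences: $(b^{*}_{N-2k})^{(N)} = F^{2k}(C^*)$, strictly increasing in $(0,1)$, and $(b^{*}_{N-2k-1})^{(N)} = F^{2k}\bigl(F(C^*)\bigr)$, strictly decreasing in $(1,\infty)$. For every $1 \leq n \leq N$ one therefore obtains $C^* = (b^{*}_N)^{(N)} \leq (b^{*}_n)^{(N)} \leq (b^{*}_{N-1})^{(N)} = F(C^*)$, with strict inequality at the intermediate indices. The remaining bound $F(C^*) < 1/C^*$ reduces, after squaring and simplification, to $[\delta_2 k_1^{-4/3}-\delta_1]\cdot[(C^*)^{-2}-(C^*)^{-1}] < 0$, which holds because the first factor is negative (hypothesis) and the second positive ($C^* < 1$). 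Case (2) is entirely symmetric: $F(C^*) < 1$ now supplies the lower bound and no extra algebraic inequality is needed beyond the monotonicity of the two subsequences. The main obstacle is the no-$2$-cycle argument: it is the only place where the full algebraic structure of $F$ must be exploited, and it is what rules out the pathological possibility that the backward orbit stabilises on a non-trivial oscillation rather than bracketing the fixed point $1$.
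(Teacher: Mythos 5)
Your proof is correct, and it takes a genuinely different route from the paper's. The paper disposes of this lemma in two lines: it declares the argument "entirely equivalent" to the inductive proof of Lemma \ref{lemmaavanti} with $\delta_1$ and $\delta_2$ swapped (alternating above/below $1$, then the interleaved monotonicity $b_{2n+1}<b_{2n-1}$, $b_{2n}<b_{2n+2}$, each step verified by a hand-crafted chain of algebraic equivalences reducing to $(\delta_2-\delta_1 k_1^{4/3})<(\delta_2-\delta_1 k_1^{4/3})\,b_{2n-1}$ and its analogues), and then only writes out the one genuinely new inequality $(b^*_{N-1})^{(N)}<1/C^*$ --- which you also prove, by essentially the same squaring computation. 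Your reformulation as iteration of the single decreasing map $F$ replaces all of those ad hoc inductions with structural facts: $F$ is a decreasing homeomorphism of $(0,\infty)$ fixing $1$ and swapping $(0,1)$ with $(1,\infty)$; $F$ has no $2$-cycle (your symmetric-function computation in $s=a+b$, $p=ab$ is correct and does force $\delta_1/\delta_2=k_1^{-4/3}$, contradicting the hypothesis); and $|F'(1)|=(2\delta_2 k_1^{-4/3}+\delta_1)/(\delta_2 k_1^{-4/3}+2\delta_1)<1$ pins down the sign of $F^2-\mathrm{id}$ on each invariant interval. What this buys is twofold: the interleaved monotonicity and the bracketing drop out simultaneously from $F^2>\mathrm{id}$ on $(0,1)$ and $F^2<\mathrm{id}$ on $(1,\infty)$, and --- unlike the paper's argument --- your setup delivers Lemma \ref{lemmaindietrobis} for free, since the two monotone bounded subsequences must converge to fixed points of $F^2$, of which $1$ is the only one (the paper instead needs a separate case analysis on a hypothetical limit $L>1$). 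The cost is that your argument leans on the global no-$2$-cycle identity, which is the one place where a slip in the algebra would sink the proof; I checked it and it is sound. One cosmetic remark: the lemma's quantifier "$\forall n\geq 1$" must be read as excluding the boundary indices where equality trivially holds, but that imprecision is the paper's, not yours.
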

\begin{proof}
This proof is entirely equivalent to the one we proposed for Lemma \ref{lemmaavanti} by swapping $\delta_1$ and $\delta_2$ coefficients. The only statement left to prove is
$$
(b_{N-1}^{*})^{(N)} < \frac{1}{C^{*}}, \,\,\, \text{if} \,\,\, C^* <1.
$$
By a direct calculation we have
\begin{equation*}
\begin{split}
    (b_{N-1}^{*})^{(N)} < \frac{1}{C^{*}} \iff &\frac{-\delta_2 k_1^{-4/3} + \sqrt{\delta_2^2 k_1^{-8/3} + 4\delta_1 \delta_2 k_1^{-4/3} / C^{*2} + 4\delta_1^2 / C^{*}}}{2\delta_1} < \frac{1}{C^{*}}\\
    \iff & \delta_2^2 k_1^{-8/3} + \frac{4\delta_1 \delta_2 k_1^{-4/3}}{C^{*2}} + \frac{4\delta_1^2}{C^{*}} < \delta_2^2 k_1^{-8/3} + \frac{4\delta_1 \delta_2 k_1^{-4/3}}{C^{*}} + \frac{4\delta_1^2}{C^{*2}}\\
    \iff & \frac{\delta_2 k_1^{-4/3}}{C^{*2}} + \frac{\delta_1}{C^{*}} < \frac{\delta_2 k_1^{-4/3}}{C^{*}} + \frac{\delta_1}{C^{*2}}\\
    \iff & C^{*} (\delta_1 - \delta_2 k_1^{-4/3}) < (\delta_1 - \delta_2 k_1^{-4/3}) \iff C^{*}<1,
    \end{split}
\end{equation*}
due to the assumption $\delta_1 / \delta_2 > k_1^{-4/3}$.
\end{proof}

Lemma \ref{lemmaindietro} tells that for every $N>1$, $\{ (b^*_n) ^{(N)}\}$ lies in the compact set $[0, C^{*}]$ if $C^{*} \geq 1$ or $[0, 1/C^{*}]$ if $C^{*} < 1$, thus by compactness and a diagonal extraction argument we can choose a subsequence $(N_i)_i \in \mathbb{N}$ such that $(b_n^*)^{(N_i)}$ converges for all $n \in \mathbb{N}$ to some number $\tilde{b}_n^*$. The sequence $\tilde{b} = \{ \tilde{b}_n^* \}_n $ satisfies the following equation by construction
\begin{equation}\label{equazioneperlemmaindietrobis}
\tilde{b}^*_{n} = \frac{-\delta_2 k_1^{-4/3} + \sqrt{\delta_2^2 k_1^{-8/3} + 4\delta_1 \delta_2 k_1^{-4/3} (\tilde{b}^*_{n+1})^{-2} + 4\delta_1^2 (\tilde{b}^*_{n+1})^{-1}}}{2\delta_1}.
\end{equation}
\begin{lem}\label{lemmaindietrobis}
For every starting value $C^{*}>0$, and positive coefficient $\delta_1, \delta_2$ such that $\delta_1 / \delta_2 > k_1^{-4/3}$, the recursive sequence (\ref{equazioneperlemmaindietrobis}) satisfies
$$
\lim_{n \rightarrow \infty} \tilde{b}_n^{*} = 1.
$$
\end{lem}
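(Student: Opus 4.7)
The plan is to reduce Lemma \ref{lemmaindietrobis} directly to the already-proved Lemma \ref{lemmaavanti} by exploiting the algebraic symmetry between recursions (\ref{ricorrenzaavanti}) and (\ref{equazioneperlemmaindietrobis}). The map
$$
f(y) := \frac{-\delta_2 k_1^{-4/3} + \sqrt{\delta_2^2 k_1^{-8/3} + 4\delta_1 \delta_2 k_1^{-4/3} y^{-2} + 4\delta_1^2 y^{-1}}}{2\delta_1}
$$
defining (\ref{equazioneperlemmaindietrobis}) is precisely the image of the map $g$ defining (\ref{ricorrenzaavanti}) under the involutive substitution $\delta_1 \leftrightarrow \delta_2$ and $k_1^{4/3} \leftrightarrow k_1^{-4/3}$. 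Under this same substitution, the hypothesis $\delta_1/\delta_2 > k_1^{-4/3}$ of the present lemma becomes exactly the hypothesis $\delta_1/\delta_2 < k_1^{-4/3}$ of Lemma \ref{lemmaavanti}.

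The first step is to reindex the backward recursion to make this correspondence explicit. Setting $c_k := (b^*_{N-k})^{(N)}$ for $0 \le k \le N$, relation (\ref{ricorrenzaindietro}) becomes $c_0 = C^*$ and $c_k = f(c_{k-1})$, i.e.\ a \emph{forward} iteration of $f$. Since $f$ is autonomous, the sequence $c_k = f^k(C^*)$ does not depend on $N$, and one has $(b^*_n)^{(N)} = c_{N-n}$ for every $n \le N$. Applying Lemma \ref{lemmaavanti} verbatim to $f$ with the swapped parameters yields $\lim_{k \to \infty} c_k = 1$.

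It then remains to transfer this information to the diagonal limit $\tilde{b}^*_n$. For each fixed $n \ge 0$, along the subsequence $(N_i)$ extracted just before the statement of the lemma,
$$
\tilde{b}^*_n = \lim_{N_i \to \infty} (b^*_n)^{(N_i)} = \lim_{N_i \to \infty} c_{N_i - n} = \lim_{k \to \infty} c_k = 1,
$$
so in fact $\tilde{b}^*_n \equiv 1$, and in particular $\lim_{n \to \infty} \tilde{b}^*_n = 1$.

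The main (and essentially only) obstacle is to verify that the proof of Lemma \ref{lemmaavanti} really is invariant under the substitution $(\delta_1, k_1^{4/3}) \leftrightarrow (\delta_2, k_1^{-4/3})$. This is a mechanical check: the fixed-point equation for $f$ factors as $(y^3 - 1)(\delta_1 y + \delta_2 k_1^{-4/3}) = 0$, manifestly symmetric under the swap; the oscillation step, the monotonicity of the odd and even subsequences, and the three-case analysis around (\ref{equazioneL}) each use the parameters only through the single comparison between $\delta_1/\delta_2$ and the appropriate power of $k_1$, which is precisely the image of the original comparison under the substitution. Consequently, no new computation is needed and the argument transfers mutatis mutandis.
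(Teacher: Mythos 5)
Your proposal is correct and takes essentially the same route as the paper, which disposes of this lemma in one line by declaring it ``equivalent to the proof of Lemma \ref{lemmaavanti} by swapping $\delta_1$ and $\delta_2$''; you are in fact more careful, since you note the substitution must also exchange $k_1^{4/3}$ with $k_1^{-4/3}$ and you verify that every use of the parameters in that proof passes through the single comparison $\delta_1 k_1^{4/3}$ vs.\ $\delta_2$. Your additional observation that the truncated backward sequences are forward iterates of one autonomous map, so that the diagonal limit is the constant sequence $1$ and no subsequence extraction is needed, is a correct (and slightly stronger) simplification of the paper's argument.
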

\begin{proof}
It is equivalent to the proof of Lemma \ref{lemmaavanti} by swapping $\delta_1$ and $\delta_2$ coefficients.
\end{proof}
We are now ready to prove Theorem \ref{stationary}.
\\
\\
Let us start by considering equation (\ref{stationarysolution}) written in the following form
$$
0 = \delta_1 (a_{n-1}^2 - k_1 a_n a_{n+1}) - \delta_2 (a_{n+1}^2 - k_1^{-1} a_n a_{n-1}).
$$
We already focus our interest into positive solutions with no zero term, so dividing by $a_n$ both sides and changing variable with $b_n = \frac{a_n}{a_{n-1}}$ we obtain
$$
0 = \delta_1 (b_{n}^{-2} - k_1 b_{n+1}) - \delta_2 (b_{n+1}^2 - k_1^{-1} b_n^{-1}).
$$
We now apply a further change of variable $a_n = \tilde{a}_n / k_n^{1/3}$ and consequently $b_n = \tilde{b}_n / k_1^{1/3}$ to finally get
$$
0 = \delta_1 (k_1^{2/3}\tilde{b}_{n}^{-2} - k_1^{2/3} \tilde{b}_{n+1}) - \delta_2 (k_1^{-2/3}\tilde{b}_{n+1}^2 - k_1^{-2/3} \tilde{b}_n^{-1}).
$$
We can solve the above equation of degree two restricting ourselves only to positive solutions
\begin{equation}\label{ricorrenzaavantidim}
\begin{split}
&\tilde{b}_0 = a_1 / F > 0, \\
&\tilde{b}_{n+1} = \frac{-\delta_1 k_1^{4/3} + \sqrt{\delta_1^2 k_1^{8/3} + 4\delta_1 \delta_2 k_1^{4/3} \tilde{b}_n^{-2} + 4\delta_2^2 \tilde{b}_n^{-1}}}{2\delta_2}.
\end{split}
\end{equation}
Lemma \ref{lemmaavanti} shows that $\lim_{n \rightarrow \infty} \tilde{b}_n = 1$ every time $\delta_1 / \delta_2 < k_1^{-4/3}$. Thus, $\lim_{n \rightarrow \infty} b_n = k_1^{-1/3}$ and $\lim_{n \rightarrow \infty} a_n / a_{n-1} = k_1^{-1/3} < 1$, proving Theorem \ref{stationary} in the case $\delta_1 / \delta_2 < k_1^{-4/3}$.
\\
In the same fashion, one can consider a backward change of variable $b_n = \frac{a_{n-1}}{a_{n}}$ and obtain
$$
0 = \delta_1 (b_{n}^{2} - k_1 b_{n+1}^{-1}) - \delta_2 (b_{n+1}^{-2} - k_1^{-1} b_n).
$$
We now apply a further change of variable $a_n = \tilde{a}_n / k_n^{1/3}$ and consequently $b_n = \tilde{b}_n / k_1^{1/3}$ and finally get
$$
0 = \delta_1 (k_1^{2/3}\tilde{b}_{n}^{2} - k_1^{2/3} \tilde{b}_{n+1}^{-1}) - \delta_2 (k_1^{-2/3}\tilde{b}_{n+1}^{-2} - k_1^{-2/3} \tilde{b}_n).
$$
As before we can solve the above equation of degree two restricting ourselves only to positive solution
\begin{equation}\label{ricorrenzaindietrodim}
\begin{split}
\tilde{b}_0 &= a_0 / a_1 > 0\\
\tilde{b}_{n} &= \frac{-\delta_2 k_1^{-4/3} + \sqrt{\delta_2^2 k_1^{-8/3} + 4\delta_1 \delta_2 k_1^{-4/3} \tilde{b}_{n+1}^{-2} + 4\delta_1^2 \tilde{b}_{n+1}^{-1}}}{2\delta_1}.
\end{split}
\end{equation}
Lemma \ref{lemmaindietro} states $\lim_{n \rightarrow \infty} \tilde{b}_n = 1$ every time $\delta_1 / \delta_2 > k_1^{-4/3}$. Thus $\lim_{n \rightarrow \infty} b_n = k_1^{-1/3}$ and $\lim_{n \rightarrow \infty} a_{n-1} / a_{n} = k_1^{1/3} > 1$, proving Theorem \ref{stationary} also in the case $\delta_1 / \delta_2 > k_1^{-4/3}$.

\section{Proof of Theorem \ref{teoselfsimilarmix}}\label{dimostrazione3}

Now that we have successfully proved Theorem \ref{stationary}, we observe that equation (\ref{selfsimilarsolution}) for self-similar sequences differs from equation (\ref{stationarysolution}) for constant solution only by a perturbation term $\frac{a_n}{k_n}$. Thus, we adapt our proof to take care of this extra term.
\\
\\
Indeed, without loss of generality, we can set $a_0 = 0$ and $a_n > 0$ for every $n>0$ in equation (\ref{selfsimilarsolution}), then dividing by $a_n$ both sides and changing variable with $b_n = \frac{a_n}{a_{n-1}}$ we obtain
$$
\delta_1 (b_{n}^{-2} - k_1 b_{n+1}) - \delta_2 (b_{n+1}^2 - k_1^{-1} b_n^{-1}) - \frac{1}{a_n k_n} = 0.
$$
We now apply a further change of variable $a_n = \tilde{a}_n / k_n^{1/3}$ and consequently $b_n = \tilde{b}_n / k_1^{1/3}$ and finally get
$$
\delta_1 (k_1^{2/3}\tilde{b}_{n}^{-2} - k_1^{2/3} \tilde{b}_{n+1}) - \delta_2 (k_1^{-2/3}\tilde{b}_{n+1}^2 - k_1^{-2/3} \tilde{b}_n^{-1}) - \frac{\tilde{a}_n}{k_n^{2/3}} = 0.
$$
We can solve the above equation of degree two restricting ourselves only to positive solution
\begin{equation}\label{ricorrenzaavantidimselfsimilar}
\begin{split}
&\tilde{b}_1 = a_2 / a_1 > 0, \\
&\tilde{b}_{n+1} = \frac{-\delta_1 k_1^{4/3} + \sqrt{\delta_1^2 k_1^{8/3} + 4\delta_1 \delta_2 k_1^{4/3} \tilde{b}_n^{-2} + 4\delta_2^2 \tilde{b}_n^{-1} + 4\delta_2 k_1^{2/3} \epsilon_n}}{2\delta_2},
\end{split}
\end{equation}
where $\epsilon_n = (\tilde{a}_n k_n^{2/3})^{-1} = (a_n k_n)^{-1}$.\\

In the next Lemma we prove that $\lim _{n \rightarrow \infty} \epsilon_n = 0$.
\begin{lem}\label{perturbazione}
If $\{ a_n \}_n$ is a positive self-similar sequence satisfying equation (\ref{selfsimilar}) with $k_1^{-4} \leq \delta_1 / \delta_2 \leq 1$, then
$$
\lim_{n \rightarrow \infty } a_n k_n = \infty.
$$
\end{lem}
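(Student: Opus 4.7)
My plan is to exploit the energy flux identity associated to the self-similar ansatz $Y_n(t)=a_n/(t-t_0)$, and then derive a contradiction from the assumption that $a_n k_n$ stays bounded along a subsequence. First I would multiply the ODE (\ref{mixedmodeliniziale}) by $Y_n$ and sum from $n=1$ to $N$. Both the $\delta_1$ and the $\delta_2$ cubic terms telescope after a single index shift: using $a_0=0$, all interior contributions cancel and only the boundary flux through the shell $n=N$ survives. Substituting the self-similar form makes the time factor $(t-t_0)^{-3}$ drop out, leaving
\[
\sum_{n=1}^{N}a_n^{2}\;=\;\delta_1\,k_{N+1}\,a_N^{2}\,a_{N+1}\;+\;\delta_2\,k_N\,a_N\,a_{N+1}^{2}.
\]
Setting $c_n:=a_n k_n$, this rewrites as $E_N=\delta_1 a_N^{2} c_{N+1}+\delta_2 c_N a_{N+1}^{2}$, where $E_N:=\sum_{n=1}^{N}a_n^{2}$ is nondecreasing and tends to $E:=\sum_{n\geq 1}a_n^{2}\in(0,\infty)$, since self-similar solutions are finite energy by definition.

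I would then argue by contradiction. Suppose $c_n\not\to\infty$, so that there exist $M>0$ and a subsequence $n_k\uparrow\infty$ with $c_{n_k}\leq M$. Finite energy gives $a_n\to 0$, hence $a_{n_k+1}\to 0$, and the second term of the identity at $N=n_k$ satisfies $\delta_2 c_{n_k} a_{n_k+1}^{2}\leq \delta_2 M\,a_{n_k+1}^{2}\to 0$. Combined with $E_{n_k}\to E>0$, this forces $\delta_1\, a_{n_k}^{2}\,c_{n_k+1}\to E$, so that $c_{n_k+1}\geq E/(2\delta_1 a_{n_k}^{2})$ for every sufficiently large $k$. Dividing by $k_{n_k+1}=k_1 k_{n_k}$ and using $a_{n_k}^{2}k_{n_k+1}=k_1\,a_{n_k}\,c_{n_k}\leq k_1 M\,a_{n_k}$ gives
\[
a_{n_k+1}\;=\;\frac{c_{n_k+1}}{k_{n_k+1}}\;\geq\;\frac{E}{2\delta_1\,a_{n_k}^{2}\,k_{n_k+1}}\;\geq\;\frac{E}{2\delta_1 k_1 M}\cdot\frac{1}{a_{n_k}}\;\longrightarrow\;+\infty,
\]
which contradicts $a_{n_k+1}\to 0$. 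Hence $c_n=a_nk_n\to\infty$.

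The central technical point is the careful derivation of the telescoping identity: one must verify that the $\delta_1$ and $\delta_2$ cubic parts each cancel so that only the positive boundary flux $\delta_1 k_{N+1}a_N^{2}a_{N+1}+\delta_2 k_N a_N a_{N+1}^{2}$ survives, making $E_N$ a monotone sum of squares. Once that identity is in hand, the rest of the argument is essentially forced by the interplay between $a_n\to 0$ and the surviving $\delta_1$ term, which turns any bounded subsequence of $c_n$ into an unbounded subsequence of $a_{n_k+1}$. I note that this mechanism actually works for arbitrary positive $\delta_1,\delta_2$, so the restriction $k_1^{-4}\leq\delta_1/\delta_2\leq 1$ is not used at this preliminary step but is only inherited from the framework of Theorem \ref{teoselfsimilarmix}.
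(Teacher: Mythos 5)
Your telescoping identity is correct: multiplying the $n$-th equation by $Y_n$, summing over $1\le n\le N$ and using $a_0=0$, both cubic sums collapse to boundary terms and one obtains $\sum_{n=1}^N a_n^2=\delta_1k_{N+1}a_N^2a_{N+1}+\delta_2k_Na_Na_{N+1}^2$, and the subsequent contradiction argument is sound as written. But the whole mechanism hinges on the finite-energy hypothesis: you need $E_N\to E\in(0,\infty)$ and $a_n\to0$ both to kill the $\delta_2$ boundary term and to turn the lower bound $a_{n_k+1}\ge E/(2\delta_1k_1M a_{n_k})\to\infty$ into a contradiction. This is the genuine gap. Although the paper's Definition of self-similar solution formally includes finite energy, Lemma \ref{perturbazione} is invoked in the proof of Theorem \ref{teoselfsimilarmix} (inside Lemma \ref{lemmaavantiselfsimilar} and Lemma \ref{lemmaavantibackward}) precisely to show $\epsilon_n=(a_nk_n)^{-1}\to0$ for sequences that are still being constructed and whose finite energy is only established afterwards, as a consequence of $\tilde b_n\to1$ and the resulting Kolmogorov scaling. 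A proof of the lemma that presupposes finite energy therefore cannot play the role the lemma has in the paper: the overall argument becomes circular. Without the finite-energy input your identity alone does not close, since a bounded subsequence $c_{n_k}\le M$ only forces $a_{n_k}\to0$, not $a_{n_k+1}\to0$, so neither branch of the boundary flux can be discarded.

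The paper's own proof avoids this entirely by working with the recursion in the variable $c_n=a_nk_n$: under $k_1^{-4}\le\delta_1/\delta_2\le1$ it shows $c_{n+1}>c_{n-1}$, hence $c_n\ge D>0$, and then $c_{n+1}>Mc_{n-1}$ for some $M>1$, so $c_n$ grows geometrically with no integrability input whatsoever. This is also exactly where the parameter window enters; your closing remark that the restriction $k_1^{-4}\le\delta_1/\delta_2\le1$ is not needed is an artifact of having replaced it by the (here unavailable) finite-energy assumption. To repair your argument you would either have to prove finite energy of the constructed sequences independently beforehand, or switch to a recursion-only growth argument of the paper's type.
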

\begin{proof}
Let's consider a change of variable $a_n = c_n / k_n$ in equation (\ref{selfsimilar}) to obtain
$$
-c_n = \delta_1(k_1^2 c_{n-1}^2 - c_n c_{n+1}) -\delta_2 (k_1^{-2}c_{n+1}^2 - c_n c_{n-1}).
$$
It is helpful to express $c_{n+1}$ as function of previous terms
$$
c_{n+1} = \frac{-\delta_1 c_n + \sqrt{\delta_1^2 c_n^2 + 4\delta_2 k_1^{-2}c_n + 4\delta_1 \delta_2 c_{n-1}^2 + 4\delta_2^2 k_1^{-2} c_n c_{n-1}}}{2\delta_2 k_1^{-2}}.
$$
We first prove that $c_{n+1} > c_{n-1}$ for every $n \geq 1$.
\\
\\
Indeed, we have
\begin{equation}\label{remark}
    \begin{split}
        &c_{n+1} = \frac{-\delta_1 c_n + \sqrt{\delta_1^2 c_n^2 + 4\delta_2 k_1^{-2}c_n + 4\delta_1 \delta_2 c_{n-1}^2 + 4\delta_2^2 k_1^{-2} c_n c_{n-1}}}{2\delta_2 k_1^{-2}} > c_{n-1}\\
        & \iff 4\delta_2 k_1^{-2}c_n + 4\delta_1 \delta_2 c_{n-1}^2 + 4\delta_2^2 k_1^{-2} c_n c_{n-1} > 4\delta_2^2 k_1^{-4} c_{n-1}^2 + 4\delta_1 \delta_2 k_1^{-2} c_n c_{n-1}\\
        & \iff k_1^{-2}c_n + \delta_1 c_{n-1}^2 + \delta_2 k_1^{-2} c_n c_{n-1} > \delta_2 k_1^{-4} c_{n-1}^2 + \delta_1 k_1^{-2} c_n c_{n-1}\\
        & \iff k_1^{-2}c_n + c_{n-1}^2 (\delta_1 - \delta_2 k_1^{-4}) + k_1^{-2} c_n c_{n-1} (\delta_2 - \delta_1) > 0,
    \end{split}
\end{equation}
and the last inequality holds thanks to the assumption $\delta_2 k_1^{-4} \leq \delta_1 \leq \delta_2$.
\\
\\
Thus, there is a positive value $D>0$ so that $c_n \geq D$ for every $n\geq 1$.
\\
\\
With a similar argument it is possible to prove the existence of $M>1$ so that $c_{n+1} > c_{n-1} \cdot M$, which will conclude the proof.
\\
Indeed,
\begin{equation*}
    \begin{split}
        &c_{n+1} = \frac{-\delta_1 c_n + \sqrt{\delta_1^2 c_n^2 + 4\delta_2 k_1^{-2}c_n + 4\delta_1 \delta_2 c_{n-1}^2 + 4\delta_2^2 k_1^{-2} c_n c_{n-1}}}{2\delta_2 k_1^{-2}} > c_{n-1} \cdot M\\
        & \iff k_1^{-2}c_n + \delta_1 c_{n-1}^2 + \delta_2 k_1^{-2} c_n c_{n-1} > \delta_2 k_1^{-4} M^2 c_{n-1}^2 + \delta_1 k_1^{-2} M c_n c_{n-1}.
    \end{split}
\end{equation*}
Last inequality further simplifies as follows
\begin{equation*}
\begin{split}
        &k_1^{-2}c_n + c_{n-1}^2 (\delta_1 - \delta_2 k_1^{-4}M^2) + k_1^{-2} c_n c_{n-1} (\delta_2 - \delta_1 M) > 0\\
        & \iff k_1^{-2}D + D^2 (\delta_1 + k_1^{-2} \delta_2)(1 - k_1^{-2}M) > 0.
        \end{split}
\end{equation*}

Finally, by hypothesis $\lambda >1$ and $\beta>0$, hence it is possible to choose $1 < M \leq k_1^{2}$ in the latter relation, so that left-hand sides becomes a sum of positive term.
\end{proof}

\begin{rem}
We notice that upper and lower bounds on Theorem \ref{teoselfsimilarmix} arise from inequality (\ref{remark}). Condition $\delta_2 k_1^{-4} \leq \delta_1 \leq \delta_2$ is sufficient in order to satisfy inequality (\ref{remark}), although the first term $k_1^{-2}c_n$ gives a small but significant positive contribute. Consequently, upper and lower bounds for the ratio $\delta_1 / \delta_2$ can be further refined. Numerical simulation suggests the existence of a \emph{true} bound $L_{true}< k_1^{-4}$ such that theorem \ref{teoselfsimilarmix} holds in the wider domain  $\delta_2 \cdot L_{true} \leq \delta_1 $. This is consistent with Theorem \ref{teoselfsimilar} where $\delta_1 = 1$ and $\delta_2 = 0$ but also with Theorem 1 in \cite{jeong} where the author proved existence of self-similar solutions for $\delta_1 = 1$ and $0 \leq \delta_2<\beta_0$ for some small $\beta_0 > 0$.

\end{rem}

The following Lemma is an equivalent of Lemma \ref{lemmaavanti} for the sequence (\ref{ricorrenzaavantidimselfsimilar}).

\begin{lem}\label{lemmaavantiselfsimilar}
Let's suppose $\tilde{b}_1 = a_2 / a_1 = C >0$, and $k_1^{-4} \leq \delta_1/\delta_2 \leq k_1^{-4/3}$. If $\tilde{b}_1 \geq \tilde{b}_3$ then
\begin{itemize}
    \item $C \geq \tilde{b}_3 \geq \tilde{b}_5 \geq \ldots \geq \tilde{b}_{2n+1} \geq \ldots > 1$ for all $n\geq 0$;
    \item $0 < \tilde{b}_2 \leq \tilde{b}_4 \leq \tilde{b}_6 \leq \ldots \tilde{b}_{2n} \leq \ldots \leq 1 + \sqrt{\epsilon_1 \frac{k_1^{2/3}}{\delta_2}}$ for all $n \geq 1$.
\end{itemize}
Otherwise, if $\tilde{b}_1 < \tilde{b}_3$ then
\begin{itemize}
    \item $C \leq \tilde{b}_3 \leq \tilde{b}_5 \leq \ldots \leq \tilde{b}_{2n+1} \leq \ldots \leq 1 + \sqrt{\epsilon_2 \frac{k_1^{2/3}}{\delta_2}}$ for all $n\geq 0$;
    \item $\tilde{b}_2 \geq \tilde{b}_4 \geq \tilde{b}_6 \geq \ldots \tilde{b}_{2n} \geq \ldots > 1$ for all $n \geq 1$.
\end{itemize}
Moreover, $\lim_{n \rightarrow \infty} \tilde{b}_{2n+1} = \lim_{n \rightarrow \infty} \tilde{b}_{2n} = 1$.
\end{lem}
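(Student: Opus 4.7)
The strategy is to reproduce, step by step, the argument of Lemma \ref{lemmaavanti}, accounting for the additional term $4\delta_2 k_1^{2/3}\epsilon_n$ inside the square root in (\ref{ricorrenzaavantidimselfsimilar}). Because $\epsilon_n=(a_nk_n)^{-1}\to 0$ by Lemma \ref{perturbazione}, the recursion is an asymptotic perturbation of the one already analysed, whose limiting fixed-point system has $1$ as its only positive solution; the lemma organises this convergence by parity.

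First I would establish the two chains of monotonicity by induction on $n$. The dichotomy $\tilde{b}_1\geq\tilde{b}_3$ versus $\tilde{b}_1<\tilde{b}_3$ fixes the pattern at the base step, and the inductive hypothesis $\tilde{b}_{n}\leq \tilde{b}_{n+2}$ (the opposite case being symmetric) should propagate to $\tilde{b}_{n+1}\geq \tilde{b}_{n+3}$ after squaring and cross-multiplying in (\ref{ricorrenzaavantidimselfsimilar}), exactly as in the proof of Lemma \ref{lemmaavanti}. The required inequality reduces to
\begin{equation*}
\delta_1\delta_2 k_1^{4/3}\bigl(\tilde{b}_{n+2}^{-2}-\tilde{b}_{n}^{-2}\bigr)+\delta_2^{\,2}\bigl(\tilde{b}_{n+2}^{-1}-\tilde{b}_{n}^{-1}\bigr)+\delta_2 k_1^{2/3}\bigl(\epsilon_{n+2}-\epsilon_n\bigr)\leq 0,
\end{equation*}
the first two terms having the correct sign by induction, and the third because $\{\epsilon_{2n}\}$ and $\{\epsilon_{2n+1}\}$ are decreasing (an immediate consequence of the relation $c_{n+1}>c_{n-1}$ derived inside Lemma \ref{perturbazione}).

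Second, I would derive the explicit bounds. The lower bound $1$ for the odd subsequence in case 1 (and for the even subsequence in case 2) follows from comparison with the unperturbed recursion: adding a positive term inside the square root strictly increases the output, and the unperturbed recursion from Lemma \ref{lemmaavanti} already forces values $>1$; the monotonicity proved above then pins the whole subsequence above $1$. For the upper bound, if $\tilde{b}_n\geq 1$ then $\tilde{b}_n^{-1},\tilde{b}_n^{-2}\leq 1$, so
\begin{equation*}
\tilde{b}_{n+1}\leq \frac{-\delta_1 k_1^{4/3}+\sqrt{(\delta_1 k_1^{4/3}+2\delta_2)^2+4\delta_2 k_1^{2/3}\epsilon_n}}{2\delta_2}\leq 1+\sqrt{\frac{\epsilon_n k_1^{2/3}}{\delta_2}},
\end{equation*}
by the elementary estimate $\sqrt{a^2+b}\leq a+\sqrt{b}$ for $a,b\geq 0$. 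Since $\epsilon_n\leq \epsilon_1$ for odd $n\geq 1$ and $\epsilon_n\leq \epsilon_2$ for even $n\geq 2$, this produces the uniform bounds stated in the lemma.

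Finally, each parity subsequence being bounded and monotone converges to some $L_{\mathrm{odd}},L_{\mathrm{even}}>0$. Passing to the limit in (\ref{ricorrenzaavantidimselfsimilar}) and using $\epsilon_n\to 0$, the pair $(L_{\mathrm{odd}},L_{\mathrm{even}})$ satisfies exactly the coupled fixed-point system analysed at the end of the proof of Lemma \ref{lemmaavanti}, whose only positive solution is $L_{\mathrm{odd}}=L_{\mathrm{even}}=1$. I expect the main technical obstacle to lie in the first step: one must check that the algebraic manipulations survive intact the presence of $\epsilon_n$ inside the square root (which, once squared, couples to every cross-term), and that the assumption $k_1^{-4}\leq \delta_1/\delta_2\leq k_1^{-4/3}$ is exploited in its lower-bound form at the same place where the analogous assumption is used in Lemma \ref{perturbazione} to secure positivity.
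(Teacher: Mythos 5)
Your overall strategy --- treating (\ref{ricorrenzaavantidimselfsimilar}) as a vanishing perturbation of the recursion of Lemma \ref{lemmaavanti}, proving parity-split monotonicity by induction, extracting the explicit bounds, and identifying the limits through the coupled fixed-point system --- is the same as the paper's, and your derivation of the upper bound $1+\sqrt{\epsilon_1 k_1^{2/3}/\delta_2}$ via $\sqrt{a^2+b}\le a+\sqrt{b}$ is essentially the estimate used there.

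There is, however, a genuine gap in the monotonicity induction. Write $f(x)=\delta_1\delta_2k_1^{4/3}x^{-2}+\delta_2^{2}x^{-1}$, so that $\tilde b_{n+1}$ is an increasing function of $f(\tilde b_n)+\delta_2k_1^{2/3}\epsilon_n$. The half-step you display, $\tilde b_n\le\tilde b_{n+2}\Rightarrow\tilde b_{n+1}\ge\tilde b_{n+3}$, does work term by term: $f$ is decreasing and $\epsilon_{n+2}\le\epsilon_n$, so all three differences in your displayed inequality are nonpositive. But the alternating induction also needs the reversed half-step, $\tilde b_{n+1}\ge\tilde b_{n+3}\Rightarrow\tilde b_{n+2}\le\tilde b_{n+4}$, for which one must show
\begin{equation*}
f(\tilde b_{n+3})-f(\tilde b_{n+1})+\delta_2k_1^{2/3}\bigl(\epsilon_{n+3}-\epsilon_{n+1}\bigr)\ \ge\ 0 ;
\end{equation*}
here the $f$-difference is nonnegative, but $\epsilon_{n+3}-\epsilon_{n+1}\le 0$ for exactly the same monotonicity reason you invoke, so the two contributions now have \emph{opposite} signs and the conclusion does not follow ``by symmetry''. (The same obstruction reappears, in the other half-step, in the case $\tilde b_1<\tilde b_3$.) This is precisely the delicate point: the paper's own proof confronts it already at the base case $\tilde b_2\le\tilde b_4$ and deals with it by first replacing $\epsilon_3$ with the larger $\epsilon_1$, i.e.\ by comparing against the recursion with frozen perturbation; any complete argument must quantify that the loss $\delta_2k_1^{2/3}(\epsilon_{n+1}-\epsilon_{n+3})$ is dominated by the gain $f(\tilde b_{n+3})-f(\tilde b_{n+1})$, and your proposal supplies no such estimate. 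A secondary, smaller issue: your lower bound $\tilde b_{2n+1}>1$ by comparison with the unperturbed map only works when the input $\tilde b_{2n}$ is $<1$, which the lemma does not guarantee (the even terms are only bounded by $1+\sqrt{\epsilon_1k_1^{2/3}/\delta_2}$ and may exceed $1$); that bound is more safely obtained a posteriori from the monotone decrease of the odd subsequence to its limit $1$.
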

\begin{proof}
We consider only the case $\tilde{b}_1 \geq \tilde{b}_3$ (the other being specular).
\\
Let's first observe that
\begin{equation*}
\begin{split}
\tilde{b}_4 = &\frac{-\delta_1 k_1^{4/3} + \sqrt{\delta_1^2 k_1^{8/3} + 4\delta_1 \delta_2 k_1^{4/3} \tilde{b}_3^{-2} + 4\delta_2^2 \tilde{b}_3^{-1} + 4\delta_2 k_1^{2/3} \epsilon_3}}{2\delta_2}\\
& \leq \frac{-\delta_1 k_1^{4/3} + \sqrt{\delta_1^2 k_1^{8/3} + 4\delta_1 \delta_2 k_1^{4/3} \tilde{b}_3^{-2} + 4\delta_2^2 \tilde{b}_3^{-1} + 4\delta_2 k_1^{2/3} \epsilon_1}}{2\delta_2}
\end{split}
\end{equation*}
thus $\tilde{b}_2 \leq \tilde{b}_4$ if and only if
\begin{equation*}
\begin{split}
&\sqrt{\delta_1^2 k_1^{8/3} + 4\delta_1 \delta_2 k_1^{4/3} \tilde{b}_1^{-2} + 4\delta_2^2 \tilde{b}_1^{-1} + 4\delta_2 k_1^{2/3} \epsilon_1} \\&\leq \sqrt{\delta_1^2 k_1^{8/3} + 4\delta_1 \delta_2 k_1^{4/3} \tilde{b}_3^{-2} + 4\delta_2^2 \tilde{b}_3^{-1} + 4\delta_2 k_1^{2/3} \epsilon_1}\\
& \iff 4\delta_1 \delta_2 k_1^{4/3} \tilde{b}_1^{-2} + 4\delta_2^2 \tilde{b}_1^{-1} \leq 4\delta_1 \delta_2 k_1^{4/3} \tilde{b}_3^{-2} + 4\delta_2^2 \tilde{b}_3^{-1}\\
& \iff \delta_1 k_1^{4/3} \tilde{b}_1^{-2} + \delta_2 \tilde{b}_1^{-1} \leq \delta_1 k_1^{4/3} \tilde{b}_3^{-2} + \delta_2 \tilde{b}_3^{-1}\\
\end{split}
\end{equation*}
and, remembering $\delta_1 / \delta_2 \leq k_1^{-4/3}$, the latter is implied by $\tilde{b}_1 \geq \tilde{b}_3$.\\
The following cascade of implications is then an immediate consequence
$$
\tilde{b}_1 \geq \tilde{b}_3 \implies \tilde{b}_2 \leq \tilde{b}_4 \implies \tilde{b}_3 \geq \tilde{b}_5 \implies \tilde{b}_4 \leq \tilde{b}_6 \implies \ldots
$$
$$
\implies \tilde{b}_{2n-1} \geq \tilde{b}_{2n+1} \implies \tilde{b}_{2n} \leq \tilde{b}_{2n+2}, \,\,\,\ \forall n\geq 1.
$$
We now say that $\{ \tilde{b}_{2n+1}\}_n$ admits a finite limit, say $L_1$: with the same argument used in Lemma \ref{lemmaavanti}, thanks to Lemma \ref{perturbazione} one can easily prove $L_1 = 1$.
\\
We now prove the upper bound
$$
\tilde{b}_{2n} \leq 1 + \sqrt{\epsilon_1 \frac{k_1^{2/3}}{\delta_2}}, \,\,\,\ \forall n\geq 1.
$$
We first stress that $\tilde{b}_1 \geq \tilde{b}_3$ only if $C = \tilde{b}_1 > 1$, thus we can write
\begin{equation*}
\begin{split}
\tilde{b}_{2n} =& \frac{-\delta_1 k_1^{4/3} + \sqrt{\delta_1^2 k_1^{8/3} + 4\delta_1 \delta_2 k_1^{4/3} \tilde{b}_{2n-1}^{-2} + 4\delta_2^2 \tilde{b}_{2n-1}^{-1} + 4\delta_2 k_1^{2/3} \epsilon_{2n-1}}}{2\delta_2}\\
\leq &\frac{-\delta_1 k_1^{4/3} + \sqrt{\delta_1^2 k_1^{8/3} + 4\delta_1 \delta_2 k_1^{4/3} \tilde{b}_{2n-1}^{-2} + 4\delta_2^2 \tilde{b}_{2n-1}^{-1}} + \sqrt{4\delta_2 k_1^{2/3} \epsilon_{2n-1}}}{2\delta_2}\\
\leq& \frac{-\delta_1 k_1^{4/3} + \sqrt{\delta_1^2 k_1^{8/3} + 4\delta_1 \delta_2 k_1^{4/3} \tilde{b}_{2n-1}^{-2} + 4\delta_2^2 \tilde{b}_{2n-1}^{-1}} + \sqrt{4\delta_2 k_1^{2/3} \epsilon_{1}}}{2\delta_2}\\
\leq& \,1 + \sqrt{\epsilon_1 \frac{k_1^{2/3}}{\delta_2}}
\end{split}
\end{equation*}
where the last inequality is a direct consequence of $C>1$ and $b_{2n-1} > L_1=1$.
\\
We know also that $\{ \tilde{b}_{2n}\}_n$ admit finite limit, say $L_2$: with similar argument used in Lemma \ref{lemmaavanti} and thanks to Lemma \ref{perturbazione} we conclude $L_2=1$.
\end{proof}

Lemmas \ref{lemmaavanti} and \ref{lemmaavantiselfsimilar} show that $\lim_{n \rightarrow \infty} \tilde{b}_n = 1$ every time $k_1^{-4} \leq \delta_1 / \delta_2 \leq k_1^{-4/3}$. Thus $\lim_{n \rightarrow \infty} b_n = k_1^{-1/3}$ and $\lim_{n \rightarrow \infty} a_n / a_{n-1} = k_1^{-1/3} < 1$, proving Theorem \ref{teoselfsimilarmix} in the case $k_1^{-4} \leq \delta_1 / \delta_2 \leq k_1^{-4/3}$.
\\
\\
We now mimic again the proof of Theorem \ref{stationary} also in the case $k_1^{-4/3} < \delta_1 / \delta_2 \leq 1$, by considering a backward change of variable $b_n = \frac{a_{n-1}}{a_{n}}$ to obtain
$$
-\frac{1}{a_n k_n} = \delta_1 (b_{n}^{2} - k_1 b_{n+1}^{-1}) - \delta_2 (b_{n+1}^{-2} - k_1^{-1} b_n).
$$
By applying a further change of variable $a_n = \tilde{a}_n / k_n^{1/3}$ and consequently $b_n = \tilde{b}_n / k_1^{1/3}$ we finally get
$$
-\frac{1}{\tilde{a}_n k_n^{2/3}}  = \delta_1 (k_1^{2/3}\tilde{b}_{n}^{2} - k_1^{2/3} \tilde{b}_{n+1}^{-1}) - \delta_2 (k_1^{-2/3}\tilde{b}_{n+1}^{-2} - k_1^{-2/3} \tilde{b}_n).
$$
As before we can solve the above backward equation of degree two restricting ourselves only to positive solutions. For every $N>1$ let be
\begin{equation*}
\begin{split}
&\tilde{b}_{N}^{(N)} = C^* > 0, \\
&\tilde{b}_{n}^{(N)} = \frac{-\delta_2 k_1^{-4/3} + \sqrt{\delta_2^2 k_1^{-8/3} + 4\delta_1 \delta_2 k_1^{-4/3} (\tilde{b}_{n+1}^{(N)})^{-2} + 4\delta_1^2 (\tilde{b}_{n+1}^{(N)})^{-1} - 4 \delta_1 \epsilon_n^*}}{2\delta_1},
\end{split}
\end{equation*}
for any $0\leq n < N$, with $\epsilon_n^* = \frac{1}{\tilde{a}_n^{(N)} k_n^{2/3}}= \frac{1}{a_n^{(N)} k_n}$.
\\
\\
Lemma \ref{lemmaavantibackward} shows that there is $C^* >0$ so that the sequence $\{ \tilde{b}_n^{(N)} \}_n$ is well-defined and lies uniformly in a compact set for every $N>0$.
\begin{lem}\label{lemmaavantibackward}
For every $k_1^{-4/3} < \delta_1 / \delta_2 \leq 1$, there is $C^*>0$ so that $\tilde{b}_{n}^{(N)}$ is well-defined for every $0<n\leq N$.
\\
Moreover, there is $M^*>1$ and $N^*>0$ so that the sequence $\{ \tilde{b}_n^{(N)} \}_n$ satisfies
$$
0 < \frac{1}{M^*} \leq \tilde{b}_n^{(N)} \leq M^*
$$
for every $N > N^*$ and every $N^*< n\leq N$.
\end{lem}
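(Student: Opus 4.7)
My approach mirrors the proof of Lemma \ref{lemmaavantiselfsimilar}, transposed to the backward setting analyzed in Lemma \ref{lemmaindietro}, whose unperturbed version has $\tilde{b} \equiv 1$ as the unique positive fixed point. A direct computation shows that the linearization of the unperturbed backward map at $\tilde{b}=1$ has multiplier $\lambda = -(\delta_1 + 2\delta_2 k_1^{-4/3})/(2\delta_1 + \delta_2 k_1^{-4/3})$, which satisfies $|\lambda| < 1$ precisely under the hypothesis $\delta_1/\delta_2 > k_1^{-4/3}$; the fixed point is therefore strictly stable. The natural choice is $C^* = 1$, paired with a normalization $\tilde{a}_N^{(N)} = A$ for a sufficiently large constant $A$ depending on $\delta_1, \delta_2, k_1$, so that $\tilde{a}_n^{(N)}$ is determined recursively by $\tilde{a}_{n-1}^{(N)} = \tilde{b}_n^{(N)} \tilde{a}_n^{(N)}$.

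The core is a backward induction from $n = N$ down to $n = N^* + 1$, propagating simultaneously the decaying deviation $|\tilde{b}_n^{(N)} - 1| = O(k_n^{-2/3})$ and the compact bound $\tilde{a}_n^{(N)} \in [A/2, 2A]$. Under both, $\epsilon_n^* = 1/(\tilde{a}_n^{(N)} k_n^{2/3}) = O(k_n^{-2/3})$, and a first-order Taylor expansion of the recursion around the fixed point yields
\[
|\tilde{b}_n^{(N)} - 1| \leq |\lambda|\, |\tilde{b}_{n+1}^{(N)} - 1| + O(\epsilon_n^*) = O(k_n^{-2/3}),
\]
which closes the first bound upon summing the resulting geometric series (valid because $|\lambda| < 1$). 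The second bound is preserved because $\tilde{a}_n^{(N)} = A \prod_{j=n+1}^N (1 + O(k_j^{-2/3}))$ and the summability $\sum_j k_j^{-2/3} < \infty$ (as $k_j = 2^{\beta j}$ with $\beta > 0$) keeps this telescoping product inside $[A/2, 2A]$ uniformly in $N$. Taking $M^*$ slightly larger than $1$ and $N^*$ large enough to absorb the constants in the $O(\cdot)$ terms yields $\tilde{b}_n^{(N)} \in [1/M^*, M^*]$ for all $N^* < n \leq N$, uniformly in $N > N^*$.

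For the initial segment $0 < n \leq N^*$ there are only $N^*$ iterations to handle, independent of $N$. The endpoint data $(\tilde{b}_{N^*+1}^{(N)}, \tilde{a}_{N^*+1}^{(N)})$ lies in a fixed compact set by the previous step, and the subsequent iterates depend continuously on this compact input. It then suffices to verify that the square-root argument stays non-negative along the remaining backward steps, i.e., $4\delta_1 \epsilon_n^* \leq \delta_2^2 k_1^{-8/3} + 4\delta_1\delta_2 k_1^{-4/3}(\tilde{b}_{n+1}^{(N)})^{-2} + 4\delta_1^2(\tilde{b}_{n+1}^{(N)})^{-1}$, which is guaranteed by taking $A$ large enough to dominate the worst-case cumulative multiplicative deviation of $\tilde{a}_n^{(N)}$ over $N^*$ steps on this compact initial data set.

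The main obstacle is the self-referential coupling between $\tilde{b}$ and $\tilde{a}$: the perturbation $\epsilon_n^*$ is driven by $\tilde{a}_n^{(N)}$, while $\tilde{a}_n^{(N)}$ is a telescoping product of $N-n$ factors $\tilde{b}_j^{(N)}$. A naive induction maintaining only $\tilde{b}_n^{(N)} \in [1/M^*, M^*]$ would fail, since the product could drift exponentially in $N-n$ and destroy the smallness of the perturbation, which is in turn needed to keep $\tilde{b}$ in a compact neighbourhood of the fixed point. The resolution is to propagate the sharper decaying deviation $|\tilde{b}_n^{(N)} - 1| = O(k_n^{-2/3})$, whose summability keeps $\tilde{a}_n^{(N)}$ in a fixed compact set, which then feeds back to preserve the smallness of $\epsilon_n^*$ and the validity of the linearization argument.
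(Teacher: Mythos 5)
Your proposal is correct in outline but follows a genuinely different route from the paper. The paper's proof is a direct two-sided induction on the algebraic form of the recursion: the upper bound $\tilde{b}_{n-1}^{(N)} \leq M^*$ reduces, after clearing the square root, to $\delta_2 k_1^{-4/3}(M^*-1) \leq \delta_1 (M^*-1)$, i.e.\ exactly to the hypothesis $\delta_1/\delta_2 > k_1^{-4/3}$, while the lower bound reduces to $(\delta_1-\delta_2 k_1^{-4/3})\frac{M^*-1}{M^{*2}} \geq \epsilon_n^*$, discharged for $n>N^*$ by invoking Lemma \ref{perturbazione}; well-definedness of the square root is handled by choosing $C^*$ so that $4\delta_1\epsilon_n^*$ is dominated by the constant term. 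You instead exploit the spectral stability of the fixed point: your multiplier $\lambda = -(\delta_1+2\delta_2 k_1^{-4/3})/(2\delta_1+\delta_2 k_1^{-4/3})$ is indeed the correct derivative of the unperturbed backward map at $\tilde{b}=1$, and $|\lambda|<1$ is equivalent to $\delta_1/\delta_2>k_1^{-4/3}$, so the contraction-plus-perturbation estimate with the geometric sum $\sum_{j\geq n}|\lambda|^{j-n}\epsilon_j^*$ does close and yields the stronger quantitative statement $|\tilde{b}_n^{(N)}-1|=O(k_n^{-2/3})$ rather than mere confinement to $[1/M^*,M^*]$. Your coupled bootstrap on $\tilde{a}_n^{(N)}\in[A/2,2A]$ is also a genuine improvement in rigor: the paper controls $\epsilon_n^*$ by citing Lemma \ref{perturbazione}, which is stated for a limit self-similar sequence rather than for the truncated sequences being constructed, a mild circularity that your internal product estimate removes. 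The costs are that your induction must carry both bounds simultaneously to keep the iterates in the neighbourhood where the first-order expansion is uniformly valid (you do state this), and the finite segment $n\leq N^*$ still needs the separate compactness/continuity treatment, as in the paper. One caveat worth making explicit: fixing $C^*=1$ suffices for the lemma as stated, but the continuity argument that follows the lemma in the paper (tuning $C^*$ so that $a_0=0$) requires the conclusion for a range of starting values; your argument does extend to $C^*$ in a neighbourhood of $1$ by uniformity of the contraction, but that extension should be recorded.
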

\begin{proof}
By definition $\frac{\tilde{a_{N-1}^{(N)}}}{\tilde{a_{N}^{(N)}}} = C^*>0$, and by Lemma \ref{perturbazione} we can choose the free parameter $C^*$ small enough so that
$$
4\delta_1 \epsilon_n^* \leq 4\delta_1 \max \{ \epsilon_1^*, \epsilon_2^* \} \leq \delta_2 k_1^{-8/3}
$$
for every $n>0$, this implies that the square root in the expression of sequence $\{ \tilde{b}_n^{(N)} \}_n$ is well-defined.
\\
\\
We now prove the statement by induction over $n\leq N$. Indeed, for every $C^*$ there is $M^*>1$ so that 
$$
\frac{1}{M^*} \leq C^*=\tilde{b}_N^{(N)} \leq M^*
$$
Let's suppose now $\frac{1}{M^*} \leq \tilde{b}_n^{(N)} \leq M^*$ for some $n\leq N$. By definition
\begin{equation*}
    \begin{split}
        \tilde{b}_{n-1}^{(N)} \leq M^* &\iff \frac{4 \delta_1 \delta_2 k_1^{-4/3}}{(\tilde{b}_n^{(N)})^2} + \frac{4 \delta_1^2}{\tilde{b}_n^{(N)}} \leq 4\delta_1^2M^{*2} + 4\delta_1 \delta_2 k_1^{-4/3}M^* + 4\delta_1 \epsilon_n^*\\
        & \iff \frac{\delta_2 k_1^{-4/3}}{(\tilde{b}_n^{(N)})^2} + \frac{ \delta_1}{\tilde{b}_n^{(N)}} \leq \delta_1 M^{*2} + \delta_2 k_1^{-4/3}M^* + \epsilon_n^*.
    \end{split}
\end{equation*}
By hypothesis $\frac{1}{M^*} \leq \frac{1}{\tilde{b}_n^{(N)}} \leq M^*$, so it is enough to prove
\begin{equation*}
    \begin{split}
        &\delta_2 k_1^{-4/3}M^{*2}+ \delta_1 M^{*2} \leq \delta_1 M^{*2} + \delta_2 k_1^{-4/3}M^*\\
        & \iff \delta_2 k_1^{-4/3} (M^*-1) \leq \delta_1 (M^*-1),
    \end{split}
\end{equation*}
the latter being true thanks to $M^*>1$ and $\delta_1 > \delta_2 k_1^{-4/3}$. This proves the right side of our claim.
\\
Again, by definition
\begin{equation*}
    \begin{split}
        \tilde{b}_{n-1}^{(N)} \geq \frac{1}{M^*} &\iff \frac{4 \delta_1 \delta_2 k_1^{-4/3}}{(\tilde{b}_n^{(N)})^2} + \frac{4 \delta_1^2}{\tilde{b}_n^{(N)}} \geq \frac{4\delta_1^2}{M^{*2}} + \frac{4\delta_1 \delta_2 k_1^{-4/3}}{M^*} + 4\delta_1 \epsilon_n^*\\
        & \iff \frac{\delta_2 k_1^{-4/3}}{(\tilde{b}_n^{(N)})^2} + \frac{ \delta_1}{\tilde{b}_n^{(N)}} \geq \frac{\delta_1}{M^{*2}} + \frac{\delta_2 k_1^{-4/3}}{M^*} + \epsilon_n^*.
    \end{split}
\end{equation*}
We observe that it is enough to prove
\begin{equation*}
    \begin{split}
        & \frac{\delta_2 k_1^{-4/3}}{M^{*2}} + \frac{ \delta_1}{M^*} \geq \frac{\delta_1}{M^{*2}} + \frac{\delta_2 k_1^{-4/3}}{M^*} + \epsilon_n^*\\
        & \iff \delta_2 k_1^{-4/3} + \delta_1 M^* \geq \delta_1 + \delta_2 k_1^{-4/3}M^* + \epsilon_n^* M^{*2}\\
        & \iff (\delta_1 - \delta_2 k_1^{-4/3}) \frac{M^*-1}{M^{*2}} \geq \epsilon_n^*.
    \end{split}
\end{equation*}
By Lemma \ref{perturbazione}, $\lim_{n \rightarrow \infty} \epsilon_n^* = 0$, so there is $N^*>0$ so that
$$
(\delta_1 - \delta_2 k_1^{-4/3}) \frac{M^*-1}{M^{*2}} \geq \epsilon_n^*
$$
for every $N^*< n\leq N$.

\end{proof}

Lemma \ref{lemmaavantibackward} shows that exists $N^{*}$ such that for every $N>0$, $\{ \tilde{b}_n ^{(N)}\}$ lies in a compact set, thus by compactness and a diagonal extraction argument we can choose a subsequence $(N_i)_i \in \mathbb{N}$ such that $\tilde{b}_n^{(N_i)}$ converges for all $n \in \mathbb{N}$ to some number $\tilde{b}^*_n$. The sequence $\tilde{b}^* = \{ \tilde{b}^*_n \}_n $ satisfies the following equation by construction
\begin{equation*}
\tilde{b}^*_{n} = \frac{-\delta_2 k_1^{-4/3} + \sqrt{\delta_2^2 k_1^{-8/3} + 4\delta_1 \delta_2 k_1^{-4/3} (\tilde{b}^*_{n+1})^{-2} + 4\delta_1^2 (\tilde{b}^*_{n+1})^{-1} - 4\delta_1 \epsilon_n^*}}{2\delta_1}.
\end{equation*}
\\
Finally, by the same argument used in Lemmas \ref{lemmaindietro} and \ref{perturbazione} we deduce $\lim_{n \rightarrow \infty} \tilde{b}^*_n = 1$ every time $\delta_1 / \delta_2 \geq k_1^{-4/3}$. Thus, the related solution $a_n^{*}$ satisfies $\lim_{n \rightarrow \infty} \tilde{b}^*_n = a_{n-1}^{*} / a_{n}^{*} = k_1^{1/3} > 1$. Moreover, by a continuity argument similar to Proposition \ref{proptecnica} we can pick a suitable $C^{*}$ so that $b_0 = 0$ and consequently $a_0 = 0$, proving Theorem \ref{teoselfsimilarmix} statements also in the case $k_1^{-4/3} < \delta_1 / \delta_2 \leq 1$.

\section*{Acknowledgements}

The author thanks Prof. F. Morandin, for suggesting the problem, reviewing drafts of the work and for helpful discussions, Prof. F. Flandoli and Prof. L.A. Bianchi for their insightful comments and review.

\addcontentsline{toc}{chapter}{Bibliography}

\bibliographystyle{elsarticle-num}

\end{document}